\newcommand*{\defeq}{\mathrel{\rlap{%
                     \raisebox{0.3ex}{$\m@th\cdot$}}%
                     \raisebox{-0.3ex}{$\m@th\cdot$}}%
                     =}
\newtheorem{counter}[subsubsection]{$\!\!$}
\newtheorem{subcounter}[subsection]{$\!\!$}
\newenvironment{defi*}{\begin{subcounter} \rm {\bf Definition.}}{\end{subcounter}}
\newenvironment{prop}{\begin{counter} {\bf Proposition.}}{\end{counter}}
\newenvironment{prop*}{\begin{subcounter} {\bf Proposition.}}{\end{subcounter}}
\newenvironment{lemma}{\begin{counter} {\bf Lemma.}}{\end{counter}}
\newenvironment{lemma*}{\begin{subcounter} {\bf Lemma.}}{\end{subcounter}}
\newenvironment{coro*}{\begin{subcounter} {\bf Corollary.}}{\end{subcounter}}
\newenvironment{theo*}{\begin{subcounter} {\bf Theorem.}}{\end{subcounter}}
\newenvironment{rema}{\begin{counter} \rm {\bf Remark.}}{\end{counter}}
\newenvironment{rema*}{\begin{subcounter} \rm {\bf Remark.}}{\end{subcounter}}
\newenvironment{exam*}{\begin{subcounter} \rm {\bf Example.}}{\end{subcounter}}
\newenvironment{noth*}[1]{\begin{subcounter} {\bf #1.}\rm}{\end{subcounter}}
\newenvironment{proof}{{\flushleft \bf Proof~:}}{\hfill $\square$ \vspace{5mm}}
\DeclareMathOperator{\id}{id}
\def\sX{\mathscr{X}}  
\begin{document}

\begin{center}
{\bf \Large Maximal models of torsors over a local field}

\bigskip
\bigskip

Yuliang Huang

\bigskip
\today
\end{center}

\bigskip

\begin{center}
\begin{minipage}{17.4cm}
{\small {\bf Abstract.} Let $R$ be a discrete valuation ring, and $K$ its fraction field. In 1967, Raynaud initiated the notion of maximal $R$-model for torsors over $K$, and it was further developed by Lewin-M\'en\'egaux. In this paper, motivated by a conjectural ramification theory for infinitesimal torsors, we investigate this notion of maximal model in greater detail. We prove the maximality, the compatibility along inductions, and an existence result for group schemes of semi-direct products.}
\end{minipage}
\end{center}
2010 Mathematics Subject Classification:
11S15,
14G20,
14L30,

\noindent Keywords: maximal model, torsor, abelian scheme, finite flat group scheme, different.

\tableofcontents

\newpage

\section{Introduction}

In the book of Serre \cite{Se79}, the classical ramification theory of local fields is well-documented. The assumption of residue field being perfect is crucial in this classical theory. In the need of pursuing ramification theory for higher dimensional schemes, Abbes and Saito (cf. \cite{AS02}, \cite{AS03}, \cite{Sa12}) have developed the ramification theory for local fields with possibly imperfect residue field, using techniques of rigid analytic geometry. Later on, Saito \cite{Sa19} also gave a schematic approach. This more general theory is adapted in the later works (cf. \cite{KS08}, \cite{KS13}) of Kato and Saito on ramification theory of higher dimensional varieties over perfect fields and local fields. \\

Ramification of finite \textit{separable} extensions of local fields with possibly imperfect residue field is well-understood by now, thanks to the theory of Abbes--Saito. The motivation of this paper is, what about \textit{arbitrary} finite extensions of local fields? For example, if the local field has positive characteristic, there are many inseparable extensions which cannot be recognized in the absolute Galois group. Do they have a reasonable theory of ramification? In order to talk about this question, we have to make certain things clear. First, what does it mean by ``Galois'' for inseparable extensions? In the classical setting, the Galois group $G_{L/K}$ of a Galois extension $L/K$ of local fields acts on $L$, and it makes the scheme ${\rm Spec}(L)$ a $G_{L/K}$-torsor over ${\rm Spec}(K)$. For a purely inseparable extension $K'/K$, there is no nontrivial abstract group acting on $K'$ fixing $K$. Instead, there could be some infinitesimal (or called local) group scheme acting on $K'$, and making ${\rm Spec}(K')$ a torsor over ${\rm Spec}(K)$. A significant difference is that, there might be more than one group scheme acting on $K'$, and in particular, it is reasonable to expect that they could have different behaviors of ramification. \\

Many attempts have been studied, especially, the notion of \textit{tame action} by group schemes has been greatly developed by various people, cf. \cite{CEPT96}. In \cite{Za16}, Zalamansky proved an analogous Riemann--Hurwitz formula for inseparable covers under infinitesimal diagonalizable group schemes. For more general case, there still needs new ideas. \\

In the attempt of finding such a more general ramification including infinitesimal torsors, our basic setting is as follows. Let $K$ be a local field, $\mathcal{O}_K$ the ring of integers, and $k$ the residue field. Let $G$ be a group scheme over $\mathcal{O}_K$, and then we start with a $G_K$-torsor $P_K$ over ${\rm Spec}(K)$. The first problem we are facing is that we need a generalization of the notion of integer rings in the classical theory, in other words, we need to find an integral model $P$ of $P_K$ with extended $G$-action. In the classical case, this is standard by taking the integral closure of $\mathcal{O}_K$ in the extension field, with the natural action by Galois group. In our general setting, a good candidate is Lewin-M\'en\'egaux's notion of ``minimal models'' of torsors over $K$. The study of Lewin-M\'en\'egaux's ``minimal model'' is our main interest of this paper. \\

The construction of ``minimal model'' originated in Raynaud's paper \cite{Ra67}, page 82--83, where he did the construction for abelian schemes, and intended to claim that it was {\it minimal} in the usual sense:
\begin{quote}
    {\it ``c) Soient $A$ un anneau de valuation discr\`ete, $K$ son corps de fractions, $G$ une $K$-vari\'et\'e ab\'elienne qui poss\`ede une bonne r\'eduction sur $A$, de sorte que $G$ se prolonge en un sch\'ema ab\'elien $\mathscr{G}$ sur $S={\rm Spec}~A$ et soit $X$ un $K$-espace principal homog\`ene sous $G$. Montrons que $X$ se prolonge en un $S$-sch\'ema {\rm projectif} et {\rm r\'egulier} $\sX$ (qui sera d'ailleurs un mod\`ele minimal de $X$ dans la terminologie de [2]). ...''}
\end{quote}
where [2] is \cite{Ne67} in the same volume. Later on, the idea was further developed by Lewin-M\'en\'egaux in her paper \cite{LM83}. But unfortunately, her paper is very sketchy and left ``minimality'' unexplained. It turns out surprisingly, a ``minimal model'' is not minimal but on contrary {\it maximal}, among all the integral models (see Definition \ref{def:integral_model}, Definition \ref{def:max_model}).

\begin{theo*}{\rm [Theorem \ref{thm:max}]}
Let $G$ be a flat $S$-group scheme of finite type, and $X_K\to {\rm Spec}(K)$ a $G_K$-torsor. We assume that the maximal model $X\to S$ exists. Then $X$ is maximal among all the integral models of $X_K\to{\rm Spec}(K)$, namely, if $\mathcal{X}$ is another integral model, then there is a unique model morphism $X\to \mathcal{X}$.
\end{theo*}

\noindent As our result suggested, we reasonably change the terminology to \textit{maximal model}. Maximal model has some pleasant features. For example, it is unique up to isomorphisms if exists,\footnote{The uniqueness is not merely a consequence of maximality, it is true for maximal models over arbitrary flat $S$-schemes, provided existence. See Proposition \ref{prop:max_unique}. On the other hand, we only have maximality over $S$.} and it recovers the integer ring in the classical situation, i.e., if $L/K$ is a Galois extension with the Galois group $G_{L/K}$, then the maximal model of the $G_{L/K}$-torsor ${\rm Spec}(L)$ is exactly ${\rm Spec}(\mathcal{O}_L)$, where $\mathcal{O}_L$ is the ring of integers of $L$. Under a condition of extension property, maximal models are functorial with respect to group schemes.

\begin{theo*}{\rm [Theorem \ref{thm:functoriality}]}
Let $\varphi:G\to H$ be a homomorphism of flat $S$-group schemes of finite type, and $f_K:X_K\to Y_K$ a $G_K$-equivariant morphism from a $G_K$-torsor $X_K$ to a $H_K$-torsor $Y_K$. Suppose that the maximal models $X,Y$ of $X_K,Y_K$ exist. Moreover, suppose that the following extension property holds:
\begin{enumerate}
    \item[(E)] for some finite flat base change $S'/S$ coming from a finite extension $K'/K$ of fields, which verifies the definition of maximal model for $X,Y$, there exists a section $a\in X(S')$ such that $(f_K\otimes K')(a_{K'})$ extends to a section $b\in Y(S')$.
\end{enumerate}
Then $f_K$ extends to a $G$-equivariant morphism $f$ which fits into the commutative diagram
\[
\begin{tikzcd}
G_{S'} \arrow[r, "\varphi_{S'}"] \arrow[d, "\pi_a"'] & H_{S'} \arrow[d, "\pi_b"] \\
X \arrow[r, "f"] & Y
\end{tikzcd}
\]
where $\pi_a,\pi_b$ are the finite flat morphisms as in the definition of maximal model, constructed from the sections $a$ and $b$. Moreover, if $\varphi$ is faithfully flat, then so is $f$.
\end{theo*}

More importantly, maximal models are compatible with inductions, and in particular they are compatible with quotients.

\begin{theo*}{\rm [Theorem \ref{thm:induction}]}
Let $\varphi:G\to H$ be a homomorphism of flat $S$-group schemes, $X_K$ is a $G_K$-torsor. Assume that the maximal model $X$ of $X_K$ exists. If the induced $H_K$-torsor $Y_K$ has a maximal model $Y$, then ${\rm Ind}_G^H X = X\times^G H$ is representable by $Y$.
\end{theo*}
This property is very necessary and important for building general ramification theory using maximal models. \\

As for the existence of maximal model, we have the following result concerning maximal models under semi-direct products:

\begin{theo*}{\rm [Theorem \ref{theorem:semi-direct_prod}]}
Let $X_K$ be an $G_K$-torsor over $K$, and $Y_K={\rm Ind}_{G_K}^{H_K}X_K$ the induced $H_K$-torsor. If the maximal model $Y$ of $Y_K$ exists, then the following are equivalent
\begin{enumerate}
    \item[(1)] The maximal model $X$ of $X_K$ exists;
    \item[(2)] The maximal model $X\to Y$ of the $N_{Y_K}$-torsor $X_K\to Y_K$ exists.
\end{enumerate}
Moreover, $X$ is the common maximal model in (1) and (2), if it exists.
\end{theo*}

\noindent {\bf Overview}. In Lewin-M\'en\'egaux's paper \cite{LM83}, she has proved uniqueness, (only claimed) functoriality of maximal model, the existence for the case of abelian schemes and finite flat commutative group schemes, and property of the different. However, her paper is highly sketchy, we give full details and represent her results in Section \ref{existence}, \ref{different}, and we prove functoriality in Section \ref{max}. Besides, we prove the maximality, and the compatibility with inductions in Section \ref{max}. In Section \ref{semi-direct_prod}, we prove a result on the existence of maximal model of torsors under a semi-direct product of group schemes. In Section \ref{sec:max_example}, we study and classify maximal models of torsors under some finite flat group schemes of order $p$. \\

\noindent {\bf Acknowledgement}. The author wishes to thank his PhD supervisor Matthieu Romagny, for introducing the topic and his constant support during the work. He also would like to thank Jo\~{a}o Pedro dos Santos, Laurent Moret-Bailly, and Dajano Tossici, for their interest on this work, and very valuable remarks and suggestions. \\

\section{Basic definitions (general setting)}

Throughout the paper, we work with the category of \textit{separated} schemes. When we work over a DVR base $S=\text{Spec}(R)$, we always assume that $R$ is \textit{henselian} and \textit{Japanese}. Recall that, being Japanese means that for any finite extension $K'$ of the fraction field $K$ of $R$, the normalization $R'$ of $R$ is a finite $R$-module. Torsors under group schemes are required to be schemes by definition, unless mention of the contrary.  \\

Let $R$ be a discrete valuation ring, with residue field $k$ and fraction field $K$. Let $S={\rm Spec}(R)$. If $Y$ is a $S$-scheme, then we always let $Y_K$ and $Y_k$ be its generic fiber and special fiber respectively. \\

Let $T$ be a $S$-scheme. Let $G$ be a flat $S$-group scheme of finite type, and $X_K\to T_K$ a $G_K$-torsor. First let us define the notion of \textit{integral model} of such a torsor.

\begin{defi*}\label{def:integral_model}
Let $X_K\to T_K$ be a $G_K$-torsor. An {\it integral $S$-model} (or {\it integral $R$-model}) of $X_K\to T_K$ is a faithfully flat separated morphism $\mathcal{X}\to T$ of finite type whose generic fiber is isomorphic to $X_K\to T_K$, together with an extended $G$-action on $\mathcal{X}$. A {\it model morphism} between integral models of $X_K\to T_K$ is a $G$-equivariant morphism which induces identity on generic fibers. \\
\end{defi*}

Among all the integral models, we are interested in finding a convenient one. In this paper, we study the following notion of maximal model, which serves as a candidate.

\begin{defi*}\label{def:max_model}
A {\it maximal $S$-model} (or {\it maximal $R$-model}) of the $G_K$-torsor $X_K$ is an integral model $X\to T$ of $X_K\to T_K$ satisfying the following condition: there exists a finite extension $K'/K$ and a finite flat $G$-equivariant morphism $f:G\times_S T_{S'}\to X$, where $S'$ denotes the normalization of $S$ in $K'$.
\end{defi*}

\begin{rema*} \label{rmk:max_model}
\begin{enumerate}
  \item[(1)] Notice that the original notion in \cite{LM83} is called ``minimal model'', where the idea goes back to Raynaud \cite{Ra67}. We have found it awkward because later we will see that such a ``minimal'' model is actually maximal. To avoid further ambiguity, we decided to correct the terminology.
  \item[(2)] From the last condition, we have the diagram
      \[
      \begin{tikzcd}
      G_{T'} \arrow[rdd, bend right] \arrow[rd, "g" description] \arrow[rrd, bend left, "f"] & & \\
      & X_{T'} \arrow[r] \arrow[d] & X \arrow[d]\\
      & T' \arrow[r] & T
      \end{tikzcd}
      \]
      where $T':=T\times_S S'$, and $f$ factors through the $T'$-morphism $g$. In particular, the morphism $g_{K'}$ on the generic fiber is an isomorphism of trivial $T'$-torsors, since $K'$ trivializes $X_K\to T_K$.
  \item[(3)] Note that it is meaningless to speak of maximal model of a $G_K$-torsor $X_K\to T_K$ which cannot be trivialized by any finite extension $K'/K$. If the torsor can be trivialized by a finite extension $K'/K$, and it naturally extends to a $G$-torsor $X\to T$ which is trivialized by the normalization $S'/S$, then this $G$-torsor is automatically a maximal model of $X_K\to T_K$. Indeed, according to the previous diagram
      \[
      \begin{tikzcd}
      G_{T_{S'}} \arrow[rrd, bend left, "f"] \arrow[rd, "\simeq" description] \arrow[rdd, bend right] & & \\
      & X_{T_{S'}} \arrow[r] \arrow[d] & X \arrow[d]\\
      & T_{S'} \arrow[r] & T
      \end{tikzcd}
      \]
      the morphism $f$ is isomorphic to the projection $X_{T_{S'}}\to X$, which is finite flat since $S'/S$ is finite flat. Note that here if the ring $R$ is not Japanese, it may very well happen that such extension $S'/S$ is not finite, hence it does not verify the definition of maximal model. \\
\end{enumerate}
\end{rema*}

\begin{prop*} \label{prop:max_unique}
A maximal model $X$ of $X_K$, if it exists, is unique up to unique $T$-isomorphism.
\end{prop*}

\begin{proof}
Let $X_1$ and $X_2$ be maximal $S$-models of $X_K$. Indeed, we can choose the same finite extension $K'_1=K'_2=K'/K$ which trivializes the torsor $X_K$. Let $T'=T\times_S S'$ where $S'$ is the normalization of $S$ in $K'$, and denote the morphisms of two models from $G_{T'}$ by
\[
f_i:\ G_{T'} := G\times_S T'\ \longrightarrow\ X_i,\ \ \ \ \ \ i=1,2.
\]
The finite flat $T$-morphism $f_1$ (resp. $f_2$) realizes $X_1$ (resp. $X_2$) as the cokernel of the finite flat groupoids
\[
R_1:=G_{T'}\times_{X_1}G_{T'}\ \rightrightarrows\ G_{T'},
\]
(resp. $R_2$). It is clear that $R_1|_{T'_K}\cong R_2|_{T'_K}$. The morphism $f_2:G_{T'}\to X_2$ is both $R_1$- and $R_2$-invariant, since it is invariant on the generic fiber, which implies the invariance by flatness of the groupoid. Therefore it induces a $T$-morphism $\alpha:X_1\to X_2$ such that $f_2\circ\alpha = f_1$. Similarly, there is a $T$-morphism $\beta:X_2\to X_1$ with $f_1\circ\beta = f_2$, and we have
\[
\left\{\begin{array}{l}
f_1 = f_2\circ\alpha = f_1\circ(\beta\alpha) \\
f_2 = f_1\circ\beta = f_2\circ(\alpha\beta)
\end{array}\right.
\]
which indicates that $\beta\alpha = {\rm id}_{X_1}$ and $\alpha\beta = {\rm id}_{X_2}$, because $f_1,f_2$ are epimorphisms.
\end{proof}

\begin{exam*}
Let $G$ be a constant finite group, and $K'/K$ a finite Galois extension with Galois group $G$. Then ${\rm Spec}(K')\to{\rm Spec}(K)$ is a $G_K$-torsor, and it is trivialized by the field extension $K'/K$. Let $R'\subset K'$ be the normalization of $R$ in $K'$, then ${\rm Spec}(R')\to{\rm Spec}(R)$ is a maximal $R$-model. Indeed, the finite flat $G$-equivariant morphism is given by projection. In particular, the maximal model ${\rm Spec}(R')$ remains to be a $G$-torsor if and only if the extension $K'/K$ of local fields is unramified. \\
\end{exam*}

\begin{exam*}
Let $K=k(\!(t)\!)$, where $k$ is a field of characteristic $p>0$. Consider the purely inseparable extension $L=k(\!(t^{1/p})\!)$ as an $\alpha_{p,K}$-torsor over $K$
\[
{\rm Spec}(L):= {\rm Spec}(k(\!(t^{1/p})\!)) \longrightarrow {\rm Spec}(K),
\]
where the $\alpha_{p,K}$-action is given by \footnote{It is the Frobenius of $\mathbb{P}^1_k$ at $\infty$.}
\[
t^{1/p} \longmapsto \frac{t^{1/p}}{1+at^{1/p}},
\]
here $a$ is the coordinate of $\alpha_{p,K}$. The action naturally extends to the integer rings $\mathcal{O}_L$ of $L$, and the $\mathcal{O}_K$-scheme ${\rm Spec}(\mathcal{O}_L)$ is the maximal model of ${\rm Spec}(L)\to{\rm Spec}(K)$. Indeed, the $\alpha_{p,K}$-torsor is tautologically trivialized by $L/K$, and we have the diagram
\[
\begin{tikzcd}
\alpha_{p,\mathcal{O}_L} \arrow[rrd, bend left=15, "f"] \arrow[rd, dashed, "g" description] \arrow[rdd, bend right=15] & & \\
& {\rm Spec}(\mathcal{O}_L\otimes_{\mathcal{O}_K}\mathcal{O}_L) \arrow[r] \arrow[d] & {\rm Spec}(\mathcal{O}_L) \arrow[d]\\
& {\rm Spec}(\mathcal{O}_L) \arrow[r] & {\rm Spec}(\mathcal{O}_K)
\end{tikzcd}
\]
where $f$ is clearly finite flat. Here the maximal model is not a torsor, hence it is ``ramified''. \\
\end{exam*}

\section{Maximality and functoriality of maximal models}\label{max}

\noindent From now on, we restrict ourselves to maximal models of torsors over a discrete valuation ring $S={\rm Spec}(R)$. The maximality in the name of maximal model agrees with the usual sense that for any integral $S$-model, there is a dominant morphism from the maximal $S$-model to such model. \\

\begin{exam*}
Let $K=k(\!(t)\!)$ be a local field of equicharacteristic where $k$ has characteristic $p>0$, its ring of integers is $R=k[\![t]\!]$. Consider the constant group $G=\mathbb{Z}/p\mathbb{Z}$ which we view as a constant group scheme over $R$, and $P_K\to{\rm Spec}(K)$ the trivial $G_K$-torsor. Obviously its maximal model is the trivial $G$-torsor $P=\mathbb{Z}/p\mathbb{Z}\to{\rm Spec}(R)$. Yet we have another integral model $\mathcal{P}={\rm Spec}(R[a]/(a^p-t^{p-1}a))$, where the action of $G={\rm Spec}(R[e]/(e^p-e))$ is given by
\[
a \longmapsto a+et.
\]
It is straightforward that this is an integral model of the original $G_K$-torsor, but it is not a maximal $S$-model. There is a natural morphism from the maximal model $\mathbb{Z}/p\mathbb{Z}$ to $\mathcal{P}$
\begin{eqnarray*}
R[a]/(a^p-t^{p-1}a) & \longrightarrow & R[e]/(e^p-e) \\
a & \longmapsto & et
\end{eqnarray*}
which is a {\it full set of sections} in the sense of Katz--Mazur \cite{KM85}. \\
\end{exam*}

\begin{theo*}\label{thm:max}
Let $X_K\to {\rm Spec}(K)$ be a $G_K$-torsor, we assume that the maximal model $X\to S$ exists. Then $X$ is maximal among all the integral models of $X_K\to{\rm Spec}(K)$, namely, if $\mathcal{X}$ is another integral model, then there is a unique model morphism $X\to \mathcal{X}$.
\end{theo*}

\begin{proof}
First, we claim that there exists a section $y\in\mathcal{X}(S')$ for some finite flat extension $S'/S$, where $S'$ is the normalization of some finite extension of fraction field $K$. Indeed, since $\mathcal{X}/S$ is surjective, we choose a closed point $x_0$ of the special fiber $\mathcal{X}_k$. Because its local ring $\mathscr{O}_{\mathcal{X},x_0}$ is flat over $S$, there is a generization $x_1$ of $x_0$. The schematic closure $\overline{\{x_1\}}$ is irreducible and faithfully flat over $S$. By Proposition 10.1.36 in \cite{Liu02}, there exists a closed point $x_1'$ of $\overline{\{x_1\}}\otimes K$, such that $x_0$ is a specialization of $x_1'$. The residue field $K'$ of $x_1'$ is a finite extension of $K$, the normalization $S'$ of $S$ in $K'$ is a DVR, since $S$ is henselian. Let $D$ denote the schematic closure of $x_1'$, which is naturally an $S'$-scheme. Note that $D$ is a subscheme of the separated scheme $\mathcal{X}_{S'}$, hence $D$ is also separated. In other words, $x_0$ is the only specialization of $x_1'$, i.e., $D$ only has two closed points $x_0$ and $x_1'$. In particular, the structral morphism $D\to S'$ is a homeomorphism. By \cite{SP19} Tag~\href{http://stacks.math.columbia.edu/tag/04DE}{04DE}, $D$ is affine, and we let $A$ be its algebra of global functions. From the structral morphism $D\to S'$
\[
\begin{tikzcd}
\mathscr{O}_{S'} \arrow[r] \arrow[rd, hook] & A \arrow[d, hook] \\
& K
\end{tikzcd}
\]
we know that $\mathscr{O}_{S'}\subset A$ in $K$, which forces $\mathscr{O}_{S'}\simeq A$ since $A$ is a local ring. Thus $D$ provides a section in $\mathcal{X}(S')$. \\

Let $S'\to S$ be a finite flat base change via some extension of fraction field $K'/K$, which trivializes the $G_K$-torsor $X_K$ and gives rise to a finite flat morphism $G_{S'}\to X$. This is guaranteed by the definition. Moreover by our previous claim, we may assume that there exists a section $y\in \mathcal{X}(S')$. Consider the finite flat groupoid
\[
\Gamma_X := G_{S'}\times_X G_{S'} \rightrightarrows G_{S'} \rightarrow X,
\]
and note that the morphism $G_{S'}\to X$ is effectively epimorphic. The integral point $y$ also gives rise to a groupoid
\[
\Gamma_{\mathcal{X}} := G_{S'}\times_{\mathcal{X}} G_{S'} \rightrightarrows G_{S'} \rightarrow \mathcal{X}
\]
where the map $G_{S'}\to\mathcal{X}$ is contructed via
\[
\begin{tikzcd}
G_{S'}\simeq G_{S'}\times_{S'}S' \arrow[r, "\id \times y"] & G_{S'}\times_{S'}\mathcal{X}_{S'} \arrow[r] & \mathcal{X}_{S'} \arrow[r] & \mathcal{X}
\end{tikzcd}
\]
The generic fibers of above two groupoids are isomorphic. Notice that $\Gamma_X$ and $\Gamma_{\mathcal{X}}$ are contained in the same background scheme $G_{S'}\times_S G_{S'}$. Since $\Gamma_X$ is flat over $S$, it is the flat closure of the generic fiber, hence we have a closed immersion
\[
\Gamma(X)\simeq \text{schematic closure of $\Gamma(\mathcal{X})\otimes K$}\hookrightarrow \Gamma(\mathcal{X}).
\]
Now the two compositions
\[
\Gamma_X \rightrightarrows G_{S'} \rightarrow \mathcal{X}
\]
coincide with the restriction of the compositions $\Gamma_{\mathcal{X}}\rightrightarrows G_{S'}\to \mathcal{X}$ to $\Gamma_X$, hence they agree on $\Gamma_X$. By the fact that $G_{S'}\to X$ is an effective epimorphism, it induces a unique morphism from $X$ to $\mathcal{X}$
\[
\begin{tikzcd}
\Gamma_X \arrow[r, shift left] \arrow[r, shift right] \arrow[d, hook] & G_{S'} \arrow[r] \arrow[d, equal] & X \arrow[d, dashed] & \\
\Gamma_{\mathcal{X}} \arrow[r, shift left] \arrow[r, shift right] & G_{S'} \arrow[r] & \mathcal{X}
\end{tikzcd}
\]
which is an isomorphism on the generic fibers.
\end{proof}

Now let us study the functorial behavior of maximal models. We do not have functoriality for maximal models in full generality, but only under certain condition.

\begin{theo*}\label{thm:functoriality}
Let $\varphi:G\to H$ be a homomorphism of flat $S$-group schemes of finite type, and $f_K:X_K\to Y_K$ a $G_K$-equivariant morphism from a $G_K$-torsor $X_K$ to a $H_K$-torsor $Y_K$. Suppose that the maximal models $X,Y$ of $X_K,Y_K$ exist. Moreover, suppose that the following extension property holds:
\begin{enumerate}
    \item[(E)] for some finite flat base change $S'/S$ coming from a finite extension $K'/K$ of fields, which verifies the definition of maximal model for $X,Y$, there exists a section $a\in X(S')$ such that $(f_K\otimes K')(a_{K'})$ extends to a section $b\in Y(S')$.
\end{enumerate}
Then $f_K$ extends to a $G$-equivariant morphism $f$ which fits into the commutative diagram
\[
\begin{tikzcd}
G_{S'} \arrow[r, "\varphi_{S'}"] \arrow[d, "\pi_a"'] & H_{S'} \arrow[d, "\pi_b"] \\
X \arrow[r, "f"] & Y
\end{tikzcd}
\]
where $\pi_a,\pi_b$ are the finite flat morphisms as in the definition of maximal model, constructed from the sections $a$ and $b$.\footnote{This means that the associated morphism $G_{S'}\to X_{S'}$ (resp. $H_{S'}\to Y_{S'}$) maps the unit section of $G_{S'}$ to the section $a\in X(S')$ (resp. $b\in Y(S')$).} Moreover, if $\varphi$ is faithfully flat, then so is $f$.
\end{theo*}

\begin{proof}
Indeed, the commutative diagram for generic fibers already exists
\[
\begin{tikzcd}
G_{S'}\otimes K \arrow[r, "\varphi_{S'}\otimes K"] \arrow[d, "\pi_a\otimes K"'] & H_{S'}\otimes K \arrow[d, "\pi_b\otimes K"] \\
X_K \arrow[r, "f_K"] & Y_K
\end{tikzcd}
\]
where $f_K\otimes K'$ sends $a_{K'}$ to $b_{K'}$, and we want to extend it to $S$. First we claim that the morphism $(\varphi_{S'}, \varphi_{S'})$ induces the following dashed arrow as a $G$-equivariant morphism
\[
\begin{tikzcd}
G_{S'}\times_S G_{S'} \arrow[r] & H_{S'}\times_S H_{S'} \\
G_{S'}\times_X G_{S'} \arrow[u, hook] \arrow[r, dashed, "\Psi"] & H_{S'}\times_Y H_{S'} \arrow[u, hook]
\end{tikzcd}
\]
The arrow $\Psi$ already exists over $K$, the only thing to check is that every image of $\Psi_K$ extends to $S$. Indeed, we may check this condition by passing to $S'$, and by $G$-equivariance we only need to have one image of $\Psi_{K'}$ that extends to $S'$. This is guaranteed by the extension property, provided by the extension of $f_{K'}(a_{K'})$ to $b$ in $Y(S')$. \\

From the claim, we obtain a morphism of groupoids
\[
\begin{tikzcd}
G_{S'}\times_X G_{S'} \arrow[r, shift left] \arrow[r, shift right] \arrow[d, "\Psi"] & G_{S'} \arrow[d] \\
H_{S'}\times_Y H_{S'} \arrow[r, shift left] \arrow[r, shift right] & H_{S'}
\end{tikzcd}
\]
hence it induces a $G$-equivariant morphism $f:X\to Y$ which extends $f_K$. \\

If $\varphi$ is faithfully flat, we apply the fiberwise criterion for flatness. By restricting $f:X\to Y$ to fibers, we obtain fibers of homomorphisms $\varphi:G\to H$, which are all flat. Since $\varphi_{S'},\pi_X,\pi_Y$ are all faithful, it implies that $f$ is also faithful.
\end{proof}

\begin{rema*}
The condition (E) holds, for example, if the group schemes are proper smooth or finite flat commutative. Since in these cases, maximal models exist and they are proper, cf. Prop \ref{prop1}, Prop \ref{prop2}. \\
\end{rema*}

Another nice property of maximal model is the compatibility with inductions. Let $\varphi:G\to H$ be a homomorphism of flat $S$-group schemes of finite type, and $X_K$ is a $G_K$-torsor over ${\rm Spec}(K)$ whose maximal model exists and is denoted by $X$. Let $Y_K$ be the induced $H_K$-torsor\footnote{Here the induced torsor is a priori only an algebraic space. In the results concerning induction, \textit{torsors} are allowed to be algebraic spaces. The theory of maximal model extends to the setting of algebraic spaces without essential difficulties.}
\[
Y_K := {\rm Ind}_{G_K}^{H_K}X_K = X_K\times^{G_K}H_K,
\]
and moreover we assume that its maximal model $Y$ exists. In case that $X$ is an actual $G$-torsor, we know that $Y\simeq X\times^G H$ is the induced $H$-torsor. In general, this isomorphism remains true. \\

\begin{theo*}\label{thm:induction}
Let $\varphi:G\to H$ be a homomorphism of flat $S$-group schemes of finite type, $X_K$ is a $G_K$-torsor. Assume that the maximal model $X$ of $X_K$ exists. If the induced $H_K$-torsor $Y_K$ has a maximal model $Y$, then ${\rm Ind}_G^H X = X\times^G H$ is representable by $Y$.
\end{theo*}

\begin{proof}
We only need to verify the universal property of induction for $Y$. Let $Z$ be an $S$-scheme acting by $H$, with an $G$-equivariant morphism from $X$
\[
\begin{tikzcd}[column sep=tiny]
X \arrow[rr] \arrow[rd] & & Z \\
& Y \arrow[ru, dashed]
\end{tikzcd}
\]
We want to construct the dashed arrow as an $H$-equivariant morphism from $Y$, and to show that it is unique. The dashed arrow is already uniquely defined on the generic fiber, namely, we have a unique factorization by an $H_K$-equivariant morphism from $Y_K$
\[
\begin{tikzcd}[column sep=tiny]
X_K \arrow[rr] \arrow[rd] & & Z_K \\
& Y_K \arrow[ru]
\end{tikzcd}
\]
In particular, if there exist two $H$-equivariant morphisms $Y\to Z$ which extend $Y_K\to Z_K$, then they must coincide. \\

Let $\Gamma_K$ be the graph of the morphism $Y_K\to Z_K$, and let $\Gamma$ be the schematic closure of $\Gamma_K$ in $Y\times Z$. The $H_K$-action on $\Gamma_K$ naturally extends to an $H$-action on $\Gamma$. Since $\Gamma_K$ is isomorphic to $Y_K$, the schematic closure $\Gamma$ is an integral model of $Y_K$. By maximality of $Y$, the model morphism $\Gamma\to Y$ is necessarily an isomorphism, and hence we obtain the unique factorization
\[
\begin{tikzcd}[column sep=tiny]
X \arrow[rr] \arrow[rd] & & Z \\
& Y\simeq\Gamma \arrow[ru]
\end{tikzcd}
\]
\end{proof}

\begin{coro*}\label{coro:induction}
Let $\varphi: G\to H$ be a faithfully flat homomorphism of flat $S$-group schemes of finite type with $N=\ker(\varphi)$. Let $X_K$ be a $G_K$-torsor and assume that its maximal model $X$ exists, and let $Y_K$ be the induced $H_K$-torsor ${\rm Ind}_{G_K}^{H_K}X_K$. If the maximal model $Y$ of $Y_K$ exists, then the induction ${\rm Ind}_G^H X$ and the quotient $X/N$ are representable by $Y$.
\end{coro*}

\begin{proof}
Representability of ${\rm Ind}_G^H X$ is by Theorem \ref{thm:induction}. We claim that $X/N$ is representably by ${\rm Ind}_G^H X$. To show this, We verify the universal property of quotient for ${\rm Ind}_G^H X$. Let $F:X\to Z$ be an $N$-invariant map. Consider the map $\tilde{F}:G\times X\to Z$ by sending $(g,x)$ to $f(gx)$. With the $N$-action on $G\times X$ via left multiplication on $G$, and by the $N$-invariance of $F$, the map $\tilde{F}$ factors through $H\times X$
\[
\begin{tikzcd}[column sep=tiny]
G\times X \arrow[rr, "\tilde{F}"] \arrow[rd, "/N"'] & & Z \\
& H\times X \arrow[ru, "\tilde{F}'"']
\end{tikzcd}
\]
Since $\tilde{F}'$ is $G$-invariant, it induces a map
\[
{\rm Ind}_G^H X = (H\times X)/G \longrightarrow Z
\]
and we obtain a factorization
\[
\begin{tikzcd}[column sep=tiny]
X \arrow[rr] \arrow[rd] & & Z \\
& {\rm Ind}_G^H X \arrow[ru]
\end{tikzcd}
\]
The uniqueness of this factorization follows from the uniqueness on the generic fiber. Therefore ${\rm Ind}_G^H X=Y$ represents the quotient $X/N$.
\end{proof}

\section{Existence of maximal models}\label{existence}

In this section, we review the results of Lewin-M\'en\'egaux \cite{LM83} on the existence of the maximal model of a torsor over ${\rm Spec}(K)$ under proper smooth group schemes and finite flat commutative group schemes, and we provide full details. \\

\subsection{The case of proper smooth group schemes}

Note that by Stein factorization, a proper smooth $S$-group scheme is an extension of a finite \'etale $S$-group scheme by an abelian scheme. Since Lewin-M\'en\'egeaux's proof does not depend on geometric connectivity, hence it immediately extends to proper smooth case. Let $G$ be a proper smooth $S$-group scheme. Suppose that we have a $G_K$-torsor $X_K$ over $K$, trivialized by a finite extension $K'/K$ of fields, i.e., we have the diagram
\[
\begin{tikzcd}
X_{K'} \arrow[r, "\sim"] \arrow[d] & G_{K'} \arrow[ld, "v"] \\
X_K
\end{tikzcd}
\]
we then obtain a finite flat $G_K$-equivariant $K$-morphism $v$. \\

Let $S'$ be the normalization of $S$ in $K'$. Consider the graph $Z=G_{K'}\times_{X_K} G_{K'}\subset G_{K'}\times_{K} G_{K'}$ of the equivalence relation defined by $v$, and let $\overline{Z}$ be the schematic closure of $Z$ in $G_{S'}\times_{S} G_{S'}$.

\begin{lemma}
The projection $p_1:\overline{Z}\to G_{S'}$ is finite flat.
\end{lemma}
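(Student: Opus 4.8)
The plan is to verify finiteness and flatness of $p_1$ separately, reducing everything to the special fibre. First I record the harmless preliminaries: since $R$ is henselian, $S'$ is again a DVR, and since $R$ is Japanese the map $S'\to S$ is finite flat; moreover $\overline{Z}$, being the schematic closure of its own generic fibre inside the $S'$-flat scheme $G_{S'}\times_S G_{S'}$, is automatically flat (equivalently torsion-free) over $S'$. The projection $p_1$ is proper: it is the restriction to the closed subscheme $\overline Z$ of the first projection $G_{S'}\times_S G_{S'}\to G_{S'}$, which is the base change along $G_{S'}\to S$ of the proper morphism $G_{S'}\to S$ and hence proper. On the generic fibre $p_1$ is the finite flat morphism $Z=G_{K'}\times_{X_K}G_{K'}\to G_{K'}$ of degree $n=[K':K]$, the base change of the finite flat $v$. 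So it remains only to understand $p_1$ over the closed point of $S$.

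The structural observation I would exploit is that $Z$ is a twisted graph. Fixing the trivialization $X_{K'}\simeq G_{K'}$, the two pullbacks of this trivialization along the two factors $K'\to K'\otimes_K K'$ differ by an automorphism of the trivial torsor over $\Spec(K'\otimes_K K')$, i.e.\ by a comparison element $c\in G(K'\otimes_K K')$; concretely $Z$ is the image of $g\mapsto(g,\,c\cdot g)$ and $p_1$ is the first coordinate. The point is that properness of $G$ makes $c$ extend integrally. Decomposing the normalization of $(R'\otimes_R R')_{\mathrm{red}}$ into a finite product $\prod_i\widetilde B_i$ of DVRs (finite over $R'$ because $R$ is Japanese), the valuative criterion of properness gives $G(\widetilde B_i)=G(\mathrm{Frac}\,\widetilde B_i)$, so $c$ extends to a section over $B:=\prod_i\widetilde B_i$. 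The extended graph identifies $\overline Z$ with $G_{S'}\times_{S'}\Spec(B)$, exhibiting $p_1$ as the base change along $G_{S'}\to S'$ of the finite flat morphism $\Spec(B)\to S'$. In particular $p_1$ is quasi-finite over the closed point, and being also proper it is finite.

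With finiteness in hand, flatness follows from the fibrewise criterion for flatness (EGA IV, 11.3.10): since both $\overline Z$ and $G_{S'}$ are flat over $S$ and $p_1$ is finite flat on the generic fibre, it suffices to check that the special fibre $p_{1,k}:\overline Z_k\to G_{k'}$ is flat, where $k'$ is the residue field of $S'$. Here $G_{k'}$ is smooth over a field, hence regular, and $\overline Z_k$ is the fibre of the $S$-flat scheme $\overline Z$ over a uniformizer of $R$; once one knows $\overline Z$ is Cohen--Macaulay (which the identification with $G_{S'}\times_{S'}\Spec(B)$ provides, $B$ being finite free over the DVR $R'$), $\overline Z_k$ is Cohen--Macaulay as well, and miracle flatness for the finite, fibre-dimension-zero morphism $p_{1,k}$ to the regular $G_{k'}$ gives flatness.

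The main obstacle is precisely the passage to the special fibre, and it is genuinely where properness of $G$ enters: extending the comparison element $c$ to an integral section via the valuative criterion is what prevents $\overline Z$ from acquiring spurious positive-dimensional fibres over $G_{k'}$. The delicate case is $K'/K$ inseparable, where $K'\otimes_K K'$ is non-reduced and $R'\otimes_R R'$ is neither normal nor reduced; there I would supplement the valuative criterion on the normalization of the reduction with the infinitesimal lifting property of the smooth group $G$ to promote the extension across the nilpotents, so that the identification $\overline Z\simeq G_{S'}\times_{S'}\Spec(B)$ (and hence the Cohen--Macaulayness used above) holds on the nose rather than only after reduction.
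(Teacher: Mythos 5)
Your preliminaries (properness of $p_1$, flatness of $\overline Z$ over $S$, the description of the generic fibre as a twisted graph governed by a comparison section $c\in G(K'\otimes_K K')$) are all correct, but the central claim --- that the extended graph identifies $\overline Z$ with $G_{S'}\times_{S'}\Spec(B)$, where $B$ is the normalization of $(R'\otimes_R R')_{\mathrm{red}}$ --- is false, and the rest of the argument (finiteness, Cohen--Macaulayness, miracle flatness) rests on it. When $K'/K$ is inseparable, $Z=G\times_K\Spec(K'\otimes_K K')$ is non-reduced and $\overline Z$, containing $Z$ as a schematically dense open, is non-reduced too, whereas $G_{S'}\times_{S'}\Spec(B)$ is regular since $G$ is smooth and $B$ is a product of DVRs; no infinitesimal lifting of $c$ can reconcile a non-reduced scheme with a regular one. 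The identification already fails in the separable ramified case: $\overline Z$ is a closed subscheme of $G_{S'}\times_S G_{S'}$, hence lives over $\Spec(R'\otimes_R R')$, and $\Spec(B)\to\Spec(R'\otimes_R R')$ is not a closed immersion (e.g.\ for $R'=\ZZ_p[\zeta_p]$ it collapses $p-1$ points of the special fibre onto one), so your ``extended graph'' $G_B\to G_{S'}\times_S G_{S'}$ is not a closed immersion and its scheme-theoretic image is a proper quotient of $G_B$, not $G_B$ itself. (Indeed, if your identification were correct, $\overline Z$ would always define an fppf equivalence relation splitting after normalization, which contradicts the whole point of the paper that maximal models are generally not torsors.)

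The idea can be repaired, but the correct base is not produced by ring-theoretic operations on $R'\otimes_R R'$: using invariance of $\overline Z$ under the right diagonal translation, one gets $\overline Z\cong G\times_S C$ where $C$ is the \emph{schematic closure of the section} $c:\Spec(K'\otimes_K K')\to G_{S'\times_S S'}$; then $C\to\Spec(R'\otimes_R R')$ is proper with zero-dimensional fibres (by $R$-flatness of $C$), hence finite, and $C\to S'$ is finite and torsion-free over a DVR, hence finite flat, which gives the lemma without any appeal to miracle flatness. This differs from the paper's route, which never extends $c$: there, quasi-finiteness is obtained from the dimension count $\dim\overline Z_k=\dim\overline Z_K=\dim G_{S'}\otimes k$ on a dense open and then propagated everywhere by $G$-equivariance of $p_1$, and flatness is checked at the generic points of the special fibre of $G_{S'}$ --- where the local ring is a DVR by smoothness of $G$, so the schematic closure is automatically flat --- and again spread by equivariance. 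Either of these mechanisms (equivariant propagation, or the translation-trivialization $\overline Z\cong G\times_S C$) is the missing ingredient in your write-up; the valuative criterion on the DVR components of the normalization, plus infinitesimal lifting, does not supply it.
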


\begin{proof}
Since $v:G_{K'}\to X_K$ is $G_K$-equivariant, the schematic closure $\overline{Z}$ is stable by the diagonal $G$-action on $G_{S'}\times_S G_{S'}$. Hence the projection $p_1:\overline{Z}\to G_{S'}$ is also $G$-equivariant. \\

We show that $p_1$ is surjective. Indeed, since $p_1$ is proper, and its image contains the generic fiber of $G_{S'}$ which is dense, hence it is surjective. Next we show that $p_1$ is finite. It suffices to prove that $p_1$ is quasi-finite, then finiteness follows from properness of $\overline{Z}$. Notice that we have
\[
\dim{\overline{Z}_k} = \dim{\overline{Z}_K} = \dim{G_{K'}} = \dim{G_{S'}\otimes k}
\]
since $\overline{Z}$ and $G_{S'}$ are both flat over $S$. Thus over an open dense subscheme of $G_{S'}$, $p_1$ is quasi-finite. By the $G$-action and the $G$-equivariance of $p_1$, it is therefore quasi-finite. \\

Finally we show the flatness of $p_1$. Let $\eta'$ be a generic point of the special fiber of $G_{S'}$, then the local ring $\mathcal{O}=\mathscr{O}_{G_{S'},\eta'}$ is a discrete valuation ring. Notice that here we use the smooth condition of $G$, hence it satisfies Serre's $R1$ condition. We have the cartesian squares
\[
\begin{tikzcd}
\overline{Z}_{\mathcal{O}} \arrow[r, hook] \arrow[d] & G_{S'}\otimes\mathcal{O} \arrow[r] \arrow[d] & {\rm Spec}(\mathcal{O}) \arrow[d, "\text{flat}"] \\
\overline{Z} \arrow[r, hook] & G_{S'}\times_S G_{S'} \arrow[r] & G_{S'}
\end{tikzcd}
\]
where $\overline{Z}_{\mathcal{O}}$ is the schematic closure of $\overline{Z}_K\otimes\text{Frac}(\mathcal{O})$, because the formation of schematic closure commutes with flat base change $\text{Spec}(\mathcal{O})/S'$ of discrete valuation rings. Thus $\overline{Z}_{\mathcal{O}}$ is flat over $\mathcal{O}$, the morphism $p_1$ is flat over an open subscheme. By $G$-equivariance of $p_1$, it is therefore flat.
\end{proof}

\begin{lemma}
Let $Y$ be a flat $S$-scheme, and $\Gamma\subset Y\times_S Y$. Suppose that
\begin{enumerate}
  \item[(1)] $\Gamma$ is the schematic closure of $\Gamma_K\subset Y_K\times_K Y_K$ and $\Gamma_K$ defines a flat equivalence relation over $Y_K$;
  \item[(2)] The two projections $\Gamma\rightrightarrows Y$ are flat.
\end{enumerate}
Then $\Gamma$ defines a flat equivalence relation over $Y$.
\end{lemma}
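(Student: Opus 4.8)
The plan is to verify the three defining properties of an equivalence relation --- reflexivity, symmetry and transitivity --- directly for the closed subscheme $j=(\pr_1,\pr_2)\colon\Gamma\hookrightarrow Y\times_S Y$. Since $\Gamma$ is a schematic closure it is a closed subscheme, hence $j$ is automatically a monomorphism, so no separate injectivity has to be argued. I would organize everything around a single principle about the DVR base $S$: a closed subscheme of $Y\times_S Y$ that is $S$-flat coincides with the schematic closure of its own generic fibre; the operation of schematic closure is monotone for inclusions inside $Y_K\times_K Y_K$ and commutes with isomorphisms; and a morphism out of an $S$-flat (equivalently $R$-torsion-free) scheme factors through a closed subscheme as soon as it does so on the generic fibre.

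For reflexivity, I would note that the diagonal $\Delta_Y\colon Y\hookrightarrow Y\times_S Y$ is isomorphic to $Y$, hence $S$-flat, so it equals the schematic closure $\overline{\Delta_{Y_K}}$. Reflexivity of $\Gamma_K$ gives $\Delta_{Y_K}\subseteq\Gamma_K$, and monotonicity of schematic closure then yields $\Delta_Y=\overline{\Delta_{Y_K}}\subseteq\overline{\Gamma_K}=\Gamma$. For symmetry, the swap automorphism $\sigma$ of $Y\times_S Y$ carries schematic closures to schematic closures; since $\sigma(\Gamma_K)=\Gamma_K$ by symmetry over $K$, I get $\sigma(\Gamma)=\overline{\sigma(\Gamma_K)}=\Gamma$. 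Both of these are formal.

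Transitivity is where the real work lies. I would form $W:=\Gamma\times_{\pr_2,Y,\pr_1}\Gamma$ with its composition morphism $c\colon W\to Y\times_S Y$ sending $((a,b),(b,c))$ to $(a,c)$; transitivity is exactly the assertion that $c$ factors through $\Gamma$. The first step is to show $W$ is $S$-flat: the projection $W\to\Gamma$ onto the first factor is the base change of the flat morphism $\pr_1\colon\Gamma\to Y$ along $\pr_2\colon\Gamma\to Y$, hence flat by hypothesis (2), and $\Gamma\to S$ is flat because it is a schematic closure; composing gives flatness of $W\to S$. Consequently $\mathcal{O}_W$ is $R$-torsion-free. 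Letting $\mathcal{I}$ be the ideal of $\Gamma$ and $J:=c^{*}\mathcal{I}\cdot\mathcal{O}_W$ the ideal of $c^{-1}(\Gamma)$, transitivity of $\Gamma_K$ shows $c_K$ factors through $\Gamma_K$, so $J_K=0$; as $J\subseteq\mathcal{O}_W$ is torsion-free while $J\otimes_R K=0$ makes it torsion, I conclude $J=0$, i.e. $c$ factors through $\Gamma$.

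Finally, the flatness of the resulting equivalence relation is nothing but hypothesis (2) on the two projections $\Gamma\rightrightarrows Y$. I expect the sole genuine obstacle to be the transitivity step, and within it the flatness of $W=\Gamma\times_Y\Gamma$ over $S$: that flatness is precisely what makes $\mathcal{O}_W$ torsion-free and thereby licenses extending the generic factorization of $c$ across the special fibre, and it is exactly here that condition (2) enters essentially. Reflexivity and symmetry, by contrast, should follow purely formally from the functoriality of schematic closure.
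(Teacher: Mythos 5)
Your proof is correct and follows the same schematic-closure strategy as the paper's own argument, which treats reflexivity explicitly and dismisses symmetry and transitivity as ``obtained similarly.'' In fact your transitivity step --- deducing $S$-flatness of $\Gamma\times_{Y}\Gamma$ from hypothesis (2) and then killing the torsion ideal $J$ --- supplies exactly the detail the paper leaves implicit, and correctly locates the one place where flatness of the two projections is genuinely needed.
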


\begin{proof}
To show that $\Gamma$ defines a flat equivalence relation on $Y$, we need three morphisms:
\begin{enumerate}
    \item[1.] Reflexivity $e:Y\to \Gamma$;
    \item[2.] Symmetry $i:\Gamma\to\Gamma$;
    \item[3.] Transitivity $\Gamma\times_{p_1,Y,p_2}\Gamma\to\Gamma$, where $p_1,p_2$ are two projections $\Gamma\rightrightarrows Y$.
\end{enumerate}
For reflexivity, the morphism $e_K$ is induced by the diagonal
\[
\begin{tikzcd}
Y \arrow[r, dashed, "e"] \arrow[rr, bend left=15, "\Delta"] & \Gamma \arrow[r, hook] & Y\times Y \\
Y_K \arrow[u, hook] \arrow[r, "e_K"] \arrow[rr, bend right=15, "\Delta_K"'] & \Gamma_K \arrow[u, hook] \arrow[r, hook] & Y_K\times Y_K \arrow[u, hook]
\end{tikzcd}
\]
by taking schematic closures, we see that the image $\Delta(Y)$ lies in $\Gamma$, hence it induces the reflexivity morphism $e:Y\to\Gamma$. The morphisms of symmetry and transitivity are obtained similarly.
\end{proof}

\begin{prop}\label{prop1}
Let $G$ be a proper smooth group scheme over $S$, and $X_K$ is a $G_K$-torsor over $K$. Then there exists a maximal $S$-model $X$ of the torsor $X_K$, it is proper over $S$ and regular.
\end{prop}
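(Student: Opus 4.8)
The plan is to produce $X$ as the quotient of $G_{S'}$ by the finite flat equivalence relation supplied by the two lemmas above. Concretely, starting from the trivialization $X_{K'}\cong G_{K'}$ and the resulting finite flat $G_K$-equivariant morphism $v\colon G_{K'}\to X_K$, I form $Z=G_{K'}\times_{X_K}G_{K'}$ and its schematic closure $\overline{Z}\subset G_{S'}\times_S G_{S'}$. The first lemma gives that the projection $p_1\colon\overline{Z}\to G_{S'}$ is finite flat; since $Z$ is symmetric and schematic closure commutes with the swap of the two factors, $\overline{Z}$ is stable under the symmetry involution $i$, so $p_2=p_1\circ i$ is finite flat as well. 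The second lemma then promotes $\overline{Z}$ to a (finite) flat equivalence relation on $G_{S'}$.

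The heart of the argument---and the step I expect to carry the real weight---is passing to the quotient $X:=G_{S'}/\overline{Z}$ and knowing that it is representable by a scheme with $\pi\colon G_{S'}\to X$ finite and faithfully flat. This is where properness of $G$ is used: as recalled above via Stein factorization, a proper smooth $S$-group scheme is an extension of a finite \'etale group scheme by an abelian scheme, hence quasi-projective, so $G_{S'}$ is quasi-projective over $S'$. Consequently every $\overline{Z}$-orbit is finite and thus contained in an affine open, and the classical theory of quotients by finite flat equivalence relations (Raynaud's passage au quotient, \cite{Ra67}) makes $X$ a scheme with $\pi$ finite locally free.

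Everything else is then formal. The relation $\overline{Z}$ is stable under the diagonal $G$-action, so $X$ inherits a $G$-action making $\pi$ $G$-equivariant; on generic fibres $G_{K'}/Z$ recovers $X_K$, and $X$ is $S$-flat because $G_{S'}$ is, so $X$ is an integral model. As $\pi\colon G\times_S S'=G_{S'}\to X$ is precisely a finite flat $G$-equivariant morphism of the type demanded by Definition \ref{def:max_model}, the scheme $X$ is a maximal model (unique by Proposition \ref{prop:max_unique}). Properness of $X/S$ follows from surjectivity of the proper morphism $G_{S'}\to S$ together with separatedness and finite type of $X/S$. Finally, regularity is immediate by descent: $G\to S$ is smooth over the regular base $S$ and stays smooth after the DVR base change $S'/S$, so $G_{S'}$ is regular; since $\pi$ is finite and faithfully flat, each local homomorphism $\mathcal{O}_{X,x}\to\mathcal{O}_{G_{S'},y}$ is flat and local with regular target, whence $\mathcal{O}_{X,x}$ is regular by faithfully flat descent of regularity (cf. \cite{SP19}). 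Thus $X$ is regular, which completes the proof.
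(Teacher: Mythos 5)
Your proposal is correct and follows essentially the same route as the paper: form the schematic closure $\overline{Z}$, use the two preceding lemmas to make it a finite flat equivalence relation on $G_{S'}$, take the quotient (the paper cites \cite{SGA3-1} Expos\'e V, Th\'eor\`eme 4.1 for representability, which is the same quotient theory you invoke via quasi-projectivity), and deduce properness and regularity by descent along the finite faithfully flat map $G_{S'}\to X$. You supply a few details the paper leaves implicit (flatness of $p_2$ via the swap involution, orbits lying in affine opens), but the argument is the same.
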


\begin{proof}
The construction of a maximal $S$-model $X$ of $X_K$ is by taking the groupoid quotient
\[
\overline{Z} \rightrightarrows G_{S'} \rightarrow X
\]
where the right arrow is the required finite flat $G$-equivariant $S$-morphism in the definition of the maximal model. Indeed, by \cite{SGA3-1} Expos\'e V, Th\'eor\`eme 4.1, $X$ is representable by a scheme. The properness of $X$ follows from faithfully flat descent, and the regularity follows from \cite{EGA} IV, Proposition 17.3.3.
\end{proof}

\subsection{The case of finite flat commutative group schemes}

In this section, let $G$ be a finite flat commutative $S$-group scheme. It is well-known that $G$ can be embedded into an abelian $S$-scheme $A$ (cf. \cite{BBM82} Th\'eor\`eme 3.1.1), and one has an exact sequence
\[
\begin{tikzcd}
0 \arrow[r] & G \arrow[r] & A \arrow[r] & B \arrow[r] & 0
\end{tikzcd}
\]
where $B:=A/G$ is also an abelian scheme. Given a $G$-torsor $P$, one can form the induced $A$-torsor $(P\times A)/G$,\footnote{Here the induced $A$-torsor $(P\times A)/G$ is indeed representable by a scheme, by \cite{SGA3-1} Expos\'e V, Th\'eor\`eme 4.1.} which induces a trivial $B$-torsor. Conversely, given an $A$-torsor $P'$ which induces a trivial $B$-torsor, the preimage of the unit section of
\[
P' \longrightarrow (P'\times B)/A = B
\]
gives a $G$-torsor. The processes are mutually inverse. \\

\begin{prop}\label{prop2}
Let $X_K$ be a $G_K$-torsor over $K$. Then there exists a maximal model $X$ of $X_K$, which is finite flat over $S$. Moreover, $X$ is a complete intersection over $S$.
\end{prop}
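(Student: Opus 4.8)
The plan is to reduce to the proper smooth case of Proposition \ref{prop1} by means of the embedding $0\to G\to A\to B\to 0$ recalled above, and then to cut $X$ out as a fibre of the maximal model of the induced $A_K$-torsor. First I would form the induced $A_K$-torsor $Y_K:=\operatorname{Ind}_{G_K}^{A_K}X_K=(X_K\times A_K)/G_K$. Since $A$ is an abelian scheme, hence proper smooth over $S$, Proposition \ref{prop1} provides a maximal model $Y$ of $Y_K$, which is proper over $S$ and regular. Because $G=\ker(A\to B)$, the composite $G_K\to B_K$ is trivial, so the quotient map $A\to B$ realizes $Y_K$ as mapping to the \emph{trivial} $B_K$-torsor; by Remark \ref{rmk:max_model}(3) the latter has the trivial $B$-torsor $B\to S$ as its tautological maximal model. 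I would extend the $A_K$-equivariant morphism $Y_K\to B_K$ to a morphism $f\colon Y\to B$ over $S$ using the functoriality Theorem \ref{thm:functoriality}: the extension property (E) holds here because $B$ is proper, so every $K'$-point of $B$ extends to a point of $B(S')$ by the valuative criterion, while the finite flat cover $\pi\colon A_{S'}\to Y$ from the definition of maximal model supplies a section $a=\pi(e)\in Y(S')$ whose image $f_K(a_{K'})$ therefore extends. I then \emph{define}
\[
X:=f^{-1}(e)=Y\times_{B,e}S,
\]
the fibre of $f$ over the unit section $e\colon S\to B$. This is a closed subscheme of $Y$, its generic fibre is the fibre of $Y_K\to B_K$ over the unit (which is $X_K$ by the mutually inverse correspondence recalled above), and it carries a natural $G$-action since $G=\ker(A\to B)$.

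Next I would check that $X$ is a maximal model and is finite flat. Pulling back the finite flat $A$-equivariant cover $\pi\colon A_{S'}\to Y$ along the closed immersion $X\hookrightarrow Y$ gives a finite flat morphism $\pi^{-1}(X)\to X$; after matching base points so that $f\circ\pi\colon A_{S'}\to B$ is a translate of the quotient $A_{S'}\to B_{S'}$, the source $\pi^{-1}(X)$ is identified with $G_{S'}$, and the resulting finite flat $G$-equivariant morphism $G_{S'}\to X$ is exactly the datum required in Definition \ref{def:max_model}; uniqueness is then automatic from Proposition \ref{prop:max_unique}. For finiteness over $S$: $X$ is a closed subscheme of the proper $S$-scheme $Y$, so $X\to S$ is proper, while the surjection $G_{S'}\to X$ with $G_{S'}$ quasi-finite over $S$ shows $X\to S$ is quasi-finite, hence finite; flatness descends along the faithfully flat morphism $G_{S'}\to X$ (as $G_{S'}\to S$ is flat), so $X\to S$ is finite flat.

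Finally, for the complete intersection statement I would argue via the regular immersion $X\hookrightarrow Y$. Let $d$ be the relative dimension of $A$, equivalently of $B$. The unit section $e\colon S\to B$ is a regular immersion of codimension $d$ because $B\to S$ is smooth, so $X$ is locally cut out in $Y$ by $d$ equations. Since $Y$ is regular of dimension $d+1$ and $X$ is finite flat over the one-dimensional base $S$, the scheme $X$ has dimension $1$, i.e.\ codimension exactly $d$ in $Y$. In the Cohen--Macaulay (indeed regular) scheme $Y$, an ideal generated by $d$ elements and of height $d$ is generated by a regular sequence, so $X\hookrightarrow Y$ is a regular immersion and $\mathscr{O}_X$ is, locally, the quotient of a regular local ring by a regular sequence; thus $X$ is a local complete intersection scheme. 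As $X\to S$ is flat and $S$ is regular, this exhibits $X\to S$ as a complete intersection over $S$.

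The main obstacle I anticipate is the middle step: producing the finite flat $G$-equivariant cover $G_{S'}\to X$ and the finiteness of $X$ over $S$. Extending $f_K$ and identifying $\pi^{-1}(X)$ with $G_{S'}$ requires care with base points (translating so that the relevant fibre of $A\to B$ is literally a coset of $G$), and deducing finiteness and flatness of $X\to S$ from those of $Y$ relies on descent along the faithfully flat cover rather than on any direct computation. By contrast, once the regularity of $Y$ from Proposition \ref{prop1} is in hand, the complete intersection property is essentially bookkeeping with codimensions and regular sequences.
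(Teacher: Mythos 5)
Your proposal is correct and follows essentially the same route as the paper: induce up to the abelian scheme $A$ via the embedding $0\to G\to A\to B\to 0$, take the maximal model $Y$ of the induced $A_K$-torsor from Proposition \ref{prop1}, and realize $X$ inside $Y$ as the locus lying over the unit section of $B$, with the finite flat cover $G_{S'}\to X$ obtained by restricting $A_{S'}\to Y$. The only real variation is that the paper defines $X$ as the schematic closure of $X_K$ in $Y$ (so $S$-flatness is automatic and the work goes into the cover), whereas you define $X:=f^{-1}(e)$ and recover flatness by descent along $G_{S'}\to X$ --- the two definitions coincide once that flatness is established --- and your ``matching base points'' step should be pinned down by taking $K'$ large enough that $X_K(K')\neq\emptyset$ and choosing the trivializing point $y_0\in X_K(K')\subset Y_K(K')$, so that $f\circ\pi$ is literally the quotient $A_{S'}\to B_{S'}\to B$ and $\pi^{-1}(X)$ is $G_{S'}$ on the nose rather than a possibly nontrivial $G_{S'}$-torsor.
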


\begin{proof}
Let $Y_K$ be the $A_K$-torsor induced by the $G_K$-torsor $X_K$. By Proposition \ref{prop1}, there is a maximal model $Y$ of $Y_K$. Let $X$ be the schematic closure of $X_K$ in $Y$. The $A$-action on $Y$ induces the extended $G$-action on $X$. We claim that $X$ is the maximal model of the $G_K$-torsor $X_K$. \\

Let $K'/K$ be a field extension which trivializes the $A_K$-torsor $Y_K$, and $S'$ the normalization of $S$ in $K'$. From the commutative diagram
\[
\begin{tikzcd}
G_{S'} \arrow[d, dashed] \arrow[r, hook] & A_{S'} \arrow[r] \arrow[d] & B_{S'} \arrow[d] \\
X \arrow[r, hook] & Y \arrow[r] & B
\end{tikzcd}
\]
it induces a finite flat $G$-equivariant $S$-morphism $G_{S'}\to X$. Therefore $X$ is the maximal model of $X_K$. The fact that $X/S$ is a complete intersection is indicated from that $X\subset Y$ is defined by the preimage of the unit section of $Y\to B$.
\end{proof}

\begin{rema}
From the proofs of Proposition \ref{prop1} and Proposition \ref{prop2}, for any finite extension $K'/K$ which trivializes the $G_K$-torsor $X_K$, it is always possible to construct a finite flat $G$-equivariant $S$-morphism $G_{S'}\to X$, where $S'$ is the normalization of $S$ in $K'$, and $X$ is the maximal model. Thus the requirement for $K'/K$ in the definition of the minimal model is only to trivialize $X_K$, in the case of proper smooth group schemes and finite flat commutative group schemes. \\
\end{rema}

\section{Maximal model of torsors under a semi-direct product}\label{semi-direct_prod}

Let $G = N\rtimes H$ be a flat $S$-group scheme, where $N\vartriangleleft G$ is a flat normal subgroup scheme. In this section, we study relations between maximal models under $G$ and those under $N$ and $H$. \\

\begin{theo*}\label{theorem:semi-direct_prod}
Let $X_K$ be an $G_K$-torsor over $K$, and $Y_K={\rm Ind}_{G_K}^{H_K}X_K$ the induced $H_K$-torsor. If the maximal model $Y$ of $Y_K$ exists, then the following are equivalent
\begin{enumerate}
    \item[(1)] The maximal model $X$ of $X_K$ exists;
    \item[(2)] The maximal model $X\to Y$ of the $N_{Y_K}$-torsor $X_K\to Y_K$ exists.
\end{enumerate}
Moreover, $X$ is the common maximal model in (1) and (2), if it exists.
\end{theo*}

\begin{proof}
$(1)\Rightarrow (2)$: We only need to check that the morphism $X\to Y$ is indeed the maximal model of the $N_{Y_K}$-torsor $X_K\to Y_K$. Let $S'/S$ be a finite flat base change that verifies the definition of $X$ and $Y$ being the maximal models of $X_K$ and $Y_K$ respectively. Then we have the following $N$-equivariant diagram
\[
\begin{tikzcd}
G_{S'\times S'} \arrow[rr, "\text{finite flat}"] \arrow[d, equal] & & X \\
N_{S'}\times_{S} H_{S'} \arrow[d, "\text{finite flat}"'] & & \\
N_{S'}\times_{S} Y \arrow[rr, "\sim"] & & N_Y \times_Y Y_{S'} \arrow[uu]
\end{tikzcd}
\]
where the up left equality is as schemes with $N_{S'}$-action. The right vertical map is $N_Y$-equivariantly induced by
\[
Y_{S'} \longrightarrow X_{S'}\longrightarrow X
\]
where the first arrow is induced by an inclusion $H\hookrightarrow G$. By the fiberwise criterion of flatness, the morphism
\[
N_Y\times_Y Y_{S'} \longrightarrow X
\]
is finite flat. Hence $X\to Y$ is the maximal model of $X_K\to Y_K$. \\

\noindent $(2)\Rightarrow (1)$: Let $h$ denote the morphism $Y\to S$. The $N_Y$-action on $X$ as an $Y$-scheme induces an $N$-action on $X$ as an $S$-scheme as follows
\[
\begin{tikzcd}
N \arrow[r] & h_*N_Y \arrow[r] & h_*{\rm Aut}_Y(X) \arrow[r] & {\rm Aut}_S(X)
\end{tikzcd}
\]
and clearly the generic fiber of this action is the $N_K$-action on $X_K$ induced from the normal subgroup $N_K\vartriangleleft G_K$. \\

Let $S'/S$ be a finite flat base change that verifies the definition of $Y$ and $X\to Y$ being the maximal models of $Y_K$ and $X_K\to Y_K$ respectively. Then we have a finite flat morphism
\[
N\times_S Y_{S'} = N_Y\times_Y Y_{S'} \longrightarrow X
\]
We claim that the $H$-action on the left (acting on $N$ and $Y$) descends to $X$. Let $\Theta$ denote $(N\times_S Y_{S'})\times_X (N\times_S Y_{S'})$. The two compositions $H\times_S\Theta\rightrightarrows X$ from the following diagram
\[
\begin{tikzcd}
H\times_S \Theta \arrow[r, shift left] \arrow[r, shift right] & H\times_S(N\times_S Y_{S'}) \arrow[d, "\text{$H$-action}"] \arrow[r] & H\times_S X \arrow[d, dashed] \\
& N\times_S Y_{S'} \arrow[r] & X
\end{tikzcd}
\]
coincide on their generic fibers, hence they agree. Since the top row is a flat groupoid because of flatness of $H/S$, it induces the dashed arrow, which is the descent $H$-action on $X$. Consequently, we obtain the $G$-action on $X$, whose generic fiber coincides with the $G_K$-action on $X_K$. \\

Finally we need to verify that $X$ is indeed the maximal model of $X$. The following diagram
\[
\begin{tikzcd}
G_{S'\times S'} \arrow[rr] \arrow[d, equal] & & X \\
N_{S'}\rtimes_{S} H_{S'} \arrow[d] & & \\
N_{S'}\times_{S} Y \arrow[rr, "\sim"] & & N_Y \times_Y Y_{S'} \arrow[uu]
\end{tikzcd}
\]
is $G$-equivariant, and the top horizontal map is finite flat.
\end{proof}

\begin{rema*}
It is crucial that $G$ is a semi-direct product, rather than a general extension of flat $S$-group schemes. If the short exact sequence
\[
\begin{tikzcd}
0 \arrow[r] & N \arrow[r] & G \arrow[r] & H \arrow[r] & 0
\end{tikzcd}
\]
does not split, then the right vertical map in the above diagram could not be defined. In other words, in general the $N_H$-torsor $G\to H$ is not the maximal model of its generic fiber, unless the sequence splits after a finite extension $S'/S$ which is the normalization of a finite extension $K'/K$ of local fields. \\
\end{rema*}

\section{The ideal sheaf of different and transitivity formula}\label{different}

In this section, let $G/S$ be either a proper smooth scheme or a finite flat commutative group scheme, $X_K$ an $G_K$-torsor, and $X$ is the maximal model of $X_K$. Let $h:G\to S$ and $\pi:X\to S$ denote the structure morphisms. In the two cases, $X$ is either regular or a local complete intersection, where for either case one can define the dualizing sheaf of $X$. Let $\omega_{G/S},\omega_{X/S}$ be the dualizing sheaves of $G/S$ and $X/S$, and $\omega$ an invertible sheaf on $S$ such that $h^*\omega=\omega_{G/S}$. \\

Let us summarize all the necessary notations in the following commutative diagram
\[
\begin{tikzcd}
S \arrow[r, bend right, "1"'] & \arrow[l, "h"'] G & \\
& G\times_S X \arrow[ld, "\sigma"] \arrow[d, "\lambda"] \arrow[rd, "q_2"] \arrow[u, "q_1"'] & \\
X \arrow[ur, bend left, "\epsilon"] \arrow[uu, "\pi"] & X\times_S X \arrow[l, "p_1"] \arrow[r, "p_2"'] & X
\end{tikzcd}
\]
where
\begin{itemize}
  \item $p_1,p_2$ are projections from $X\times_S X$,
  \item $q_1,q_2$ are projections from $G\times_S X$,
  \item $\epsilon$ is induced from the unit section of $G$,
  \item $\sigma$ is the morphism of $G$-action,
  \item $\lambda$ is $(\sigma,q_2)$.
\end{itemize}
There is a trace map from the composition $p_2\circ\lambda=q_2$
\[
{\rm\bf Tr}_{\lambda}: \lambda_*\omega_{G\times_S X/X} \longrightarrow \omega_{X\times_S X/X} \simeq p_1^*\omega_{X/S}
\]
where the structure morphisms of $G\times_S X/X$ and $X\times_S X/X$ are $q_2$ and $p_2$ respectively. Note that
\[
\omega_{G\times_S X/X} = q_1^*\omega_{G/S} = q_1^*h^*\omega = q_2^*\pi^*\omega = \lambda^*p_2^*\pi^*\omega
\]
hence the trace ${\rm\bf Tr}_{\lambda}$ and the adjunction $1\to\lambda_*\lambda^*$ induce
\[
\begin{tikzcd}
p_2^*\pi^*\omega \arrow[r, "\alpha"] \arrow[d] & p_1^*\omega_{X/S} \\
\lambda_*\lambda^*p_2^*\pi^*\omega \arrow[ur, "{\rm\bf Tr}_{\lambda}"'] &
\end{tikzcd}
\]
Let $\varphi = (\lambda\epsilon)^*\alpha: \pi^*\omega \to \omega_{X/S}$, which is an isomorphism on the generic fiber. The \textit{different ideal sheaf} of $X/S$ is defined by $\delta_{X/S}:=\pi^*\omega\otimes_{\mathscr{O}_X}\omega_{X/S}^{-1}$.\footnote{Though a priori $\delta_{X/S}$ is only a sheaf of modules, it is the image of the injection
\[
\varphi\otimes {\rm id}: \pi^*\omega\otimes_{\mathscr{O}_X}\omega_{X/S}^{-1} \longrightarrow \omega_{X/S}\otimes \omega_{X/S}^{-1}\simeq \mathscr{O}_X
\]
hence a genuine sheaf of ideals.} It turns out that this ideal sheaf of different measures how far the maximal model $X$ is from being a torsor under $G$. It plays a similar role as the usual different in the classical ramification theory of local fields. \\

\begin{prop*}
$\delta_{X/S}\simeq\mathscr{O}_X$ if and only if $X$ is a $G$-torsor over $S$.
\end{prop*}

\begin{proof}
The ``if'' part is straightforward, since $\lambda$ is an isomorphism and thus ${\rm\bf Tr}_{\lambda}$ is an isomorphism as well. \\

\noindent If $\varphi$ is an isomorphism, then $\alpha$ is an isomorphism along the diagonal. Notice that $\alpha$ is equivariant with respect to the $G$-action on the first factors of $G\times_S X$ and $X\times_S X$, therefore $\alpha$ is an isomorphism. It implies that the trace homomorphism ${\rm\bf Tr}_{\lambda}$ is surjective. Then the proposition follows from the next lemma, applying to $T=X$, $Y=X\times_S X$ and $Y'=G\times_S X$.
\end{proof}

\begin{lemma*}
Let $T$ be a flat $S$-scheme, and $\lambda:Y'\to Y$ a finite morphism of $T$-schemes, and we assume that $Y/T$ and $Y'/T$ have locally free dualizing sheaves $\omega_{Y/T}$, $\omega_{Y'/T}$. Suppose that $\lambda$ is an isomorphism over an open schematically dense subset of $Y$. If ${\rm\bf Tr}_{\lambda}:\lambda_*\omega_{Y'/T}\to\omega_{Y/T}$ is surjective, then $\lambda$ is an isomorphism.
\end{lemma*}

\begin{proof}
It suffices to show the following: Let $A\to B$ be a homomorphism of flat $R$-algebras, such that $B$ is finite as an $A$-module, and $A_K\to B_K$ is an isomorphism. If the trace map
\[
{\rm\bf Tr}: {\rm Hom}_A(B, A) \longrightarrow A
\]
is surjective, then $A\to B$ is an isomorphism. \\

Indeed, if ${\rm\bf Tr}$ is surjective, it means that there is an $A$-homomorphism $u:B\to A$ such that $u(1)=1$. Note that we have $u_K(1)=1$ and hence $u_K:B_k\to A_K$ is an isomorphism. By flatness, we have $B\subset B_K$. From the diagram
\[
\begin{tikzcd}
B \arrow[r, "u"] \arrow[d, hook] & A \arrow[d, hook] \\
B_K \arrow[r, "u_K", "\sim"'] & A_K
\end{tikzcd}
\]
we see that $u$ is injective, hence it is bijective and the inverse of $A\to B$.
\end{proof}

The next proposition is the transitivity formula for the different $\delta_{X/S}$.\\

\begin{prop*}
Let
\[
\begin{tikzcd}
1 \arrow[r] & G' \arrow[r] & G \arrow[r, "\beta"] & G'' \arrow[r] & 1
\end{tikzcd}
\]
be an exact sequence of $S$-group schemes (proper smooth schemes or finite flat commutative group schemes). Let $X_K\to X''_K$ be a morphism of torsors under $G_K$ and $G''_K$ (compatible with $\beta_K$), and $g:X\to X''$ the extended $S$-morphism of their maximal models. Then
\begin{enumerate}
  \item[(1)] $X$ is the maximal $X''$-model of the $X''_K$-torsor $X_K$ under the group $G'_K\times_K X''_K$;
  \item[(2)] $\delta_{X/S}\simeq\delta_{X/X''}\otimes g^*\delta_{X''/S}$.
\end{enumerate}
\end{prop*}

\begin{proof}
Firstly, it is clear that $X_K\to X''_K$ is a torsor under the $X''_K$-group scheme $G'_K\times_K X''_K$. Let $K'/K$ be a finite extension of fields which trivializes the torsors $X_K$ and $X''_K$, and $S'$ the normalization of $S$ in $K'$. Then one has the following diagram
\[
\begin{tikzcd}
1 \arrow[r] & G'_{S'} \arrow[r] & G_{S'} \arrow[r] \arrow[d] & G''_{S'} \arrow[r] \arrow[d] & 1 \\
& & X \arrow[r, "g"] & X'' &
\end{tikzcd}
\]
the composite morphism $G'_{S'}\to X$ gives the finite flat $(G'\times_S X'')$-equivariant $X''$-morphism
\[
(G'\times_S X'')_{S'}=G'_{S'}\times_{S'} X''_{S'} \longrightarrow X
\]
which proves (1). The transitivity formula (2) follows from the transitivity for dualizing sheaves.
\end{proof}

\section{Examples of maximal models under finite flat group schemes of order $p$} \label{sec:max_example}

Throughout this section, $R$ is a complete discrete valuation ring with the perfect residue field $k$ of characteristic $p>0$, and the fraction field $K$. Let $\pi\in R$ be a uniformizer. Moreover, we assume that $K$ contains a $p$-th root of unity whenever ${\rm char}(K)\neq p$. In the cases of $\mu_p$ and $\alpha_p$, we assume that the characteristic of $K$ is $p$. We will use the following lemma during this section.

\begin{lemma}\label{lemma:extend_normalization}
Let $G/S$ be a finite flat group scheme, and $P_K={\rm Spec}(K')$ a $G_K$-torsor over $K$, where $K'/K$ is a finite extension of fields. Let $P$ denote the normalization of $S$ in $K'$. If the $G_K$-action on $P_K$ extends to a $G$-action on $P$, then $P/S$ is the maximal model of $P_K/K$.
\end{lemma}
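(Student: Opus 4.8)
The plan is to verify directly that $P=\Spec(R')$ fulfils the two requirements of Definition \ref{def:max_model}: that it is an integral model of $P_K$, and that over a suitable finite flat base change there is a finite flat $G$-equivariant morphism onto it. First I would record the ring-theoretic facts about $R'$. Since $R$ is henselian and Japanese, $R'$ is a module-finite, normal, one-dimensional local domain with fraction field $K'$, hence a discrete valuation ring; moreover $R'\otimes_R K=K'$. This immediately shows that $P$ is an integral model in the sense of Definition \ref{def:integral_model}: the morphism $P\to S$ is finite (so of finite type and separated); it is faithfully flat, because $R'$ is a finitely generated torsion-free, hence free, module over the DVR $R$ and $\Spec(R')\to\Spec(R)$ is surjective; its generic fibre is $\Spec(K')=P_K$; and the extended $G$-action is provided by hypothesis. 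The observation that $R'$ is itself a DVR is exactly what makes the flatness argument below work.

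Next I would observe that $P_K$ is trivialized by the field $K'$ itself: the identity morphism $\Spec(K')\to\Spec(K')=P_K$ exhibits a $K'$-point of $P_K$, so $P_K\times_K K'\cong G_{K'}$. I therefore take the finite extension in Definition \ref{def:max_model} to be $K'/K$, with normalization $S'=P$, and propose as the required morphism the action map $\sigma\colon G\times_S P\to P$. By associativity of the action, $\sigma$ is $G$-equivariant for translation on the $G$-factor and the given action on $P$, and its generic fibre is the trivialization above. Hence the only thing left to check is that $\sigma$ is finite flat, which is the heart of the proof and the step I expect to be the main obstacle.

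To prove $\sigma$ finite flat I would pass to coactions. Write $G=\Spec(\Gamma)$ with $\Gamma$ finite free over $R$, and set $C=\Gamma\otimes_R R'=\mathcal{O}(G\times_S P)$ with coaction $\rho\colon R'\to C$, which is a map of $R$-algebras. Since $C$ is finite over $R$ it is a fortiori finite over $R'$ through $\rho$, so $\sigma$ is finite. For flatness I would exploit that $R'$ is a DVR, over which a finitely generated module is flat precisely when it is torsion-free. Now $C$ is $R$-free, so it embeds into $C_K=\Gamma_K\otimes_K K'$; by the torsor condition the generic action $\sigma_K$ is finite flat, since $(\sigma_K,\mathrm{pr})\colon G_K\times_K P_K\xrightarrow{\sim}P_K\times_K P_K$ identifies it with a projection, whence $C_K$ is free over $K'=\text{Frac}(R')$. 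A free $K'$-module is torsion-free as an $R'$-module, and the $R'$-submodule $C\subset C_K$ inherits this; thus $C$ is a finitely generated torsion-free, hence free, $R'$-module, and $\sigma$ is finite flat (comparing generic ranks shows this rank is $|G|$).

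With the integral-model property established and this finite flat $G$-equivariant morphism $\sigma\colon G\times_S P\to P$ in hand, $P$ satisfies every clause of Definition \ref{def:max_model}, so $P/S$ is the maximal model of $P_K/K$. In summary, the decisive points are that the normalization $R'$ is a DVR and that the generic fibre is a genuine torsor; together they reduce the flatness of $\sigma$ to the torsion-freeness of $C$ over $R'$, and everything else is bookkeeping with the definitions.
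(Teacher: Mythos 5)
Your proof is correct, and its skeleton matches the paper's: the torsor is tautologically trivialized by $K'$ itself, you take $S'=P$, and you exhibit the action map $\sigma\colon G\times_S P\to P$ as the required finite flat $G$-equivariant morphism. Where you genuinely diverge is in the one nontrivial step, the flatness of $\sigma$. You prove it by hand: $R'$ is a DVR, $C=\Gamma\otimes_R R'$ is a finitely generated $R'$-module via the coaction, it injects into the $K'$-vector space $C_K$ and is therefore torsion-free, hence free, hence flat. The paper instead uses the shearing automorphism: $({\rm pr}_1,\sigma)\colon G\times_S P\to G\times_S P$ is an isomorphism (inverse $(g,p)\mapsto (g,g^{-1}p)$), so $\sigma={\rm pr}_2\circ({\rm pr}_1,\sigma)$ is finite flat simply because ${\rm pr}_2$ is a base change of the finite flat $G\to S$. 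The paper's argument is purely formal and more general: it needs neither the DVR structure of $R'$ nor the torsor property of the generic fibre, and in fact shows that the action map of a finite flat group scheme on \emph{any} scheme is finite flat. Your argument buys nothing extra and leans on the normalization being a DVR (finitely generated torsion-free implies free), so it would not transfer to higher-dimensional bases; it also contains a small redundancy, namely invoking the torsor condition to conclude that $C_K$ is free over $K'$, which is automatic since $K'$ is a field. Still, everything you wrote is valid for the situation at hand, and the conclusion follows.
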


\begin{proof}
Indeed, the torsor $P_K$ is tautologically trivialized by itself $K'/K$. We have the following diagram
\[
\begin{tikzcd}
G_{P} \arrow[rrd, bend left, "m"] \arrow[rdd, bend right, "{\rm pr}_2"'] \arrow[rd] & & \\
& P_P \arrow[d] \arrow[r] & P \arrow[d] \\
& P \arrow[r] & S
\end{tikzcd}
\]
where $m$ is the $G$-action morphism. In order that $P/S$ is the maximal model, we need to show that $m$ is finite flat. The morphism $m$ decomposes as
\[
\begin{tikzcd}
G\times P \arrow[rr, "{\rm pr}_1\times m", "\sim"'] & & G\times P \arrow[r, "{\rm pr}_2"] & P
\end{tikzcd}
\]
hence $m$ is finite flat by the fact that ${\rm pr}_2$ is finite flat.
\end{proof}

\subsection{The \'etale-local group scheme $(\mathbb{Z}/{p\mathbb{Z}})_R$}

A torsor over $K$ under $\mathbb{Z}/{p\mathbb{Z}}$ is described by Kummer theory if ${\rm char}(K)\neq p$, by Artin--Schreier theory if ${\rm char}(K)=p$. Such a torsor is either a trivial one, or has the form ${\rm Spec}(L)$ where $L/K$ is a cyclic Galois extension of order $p$. \\

The maximal model of a trivial torsor is the trivial torsor over $R$. In the nontrivial case, let $R'$ be the integral closure of $R$ in $L$. Since $\mathbb{Z}/{p\mathbb{Z}}$-action extends to the normalization ${\rm Spec}(R')$, it is therefore the maximal model by Lemma \ref{lemma:extend_normalization}.\\

\subsection{The local-\'etale group scheme $\mu_{p,R}$}

By Kummer theory, a $\mu_{p,K}$-torsor over $K$ has the following form
\[
P_f = {\rm Spec}(K_f) := {\rm Spec}\big(K[X]/(X^p - f)\big)
\]
where $f\in K^{\times}$, and we assume $f\notin (K^{\times})^p$, which is equivalent to $P_f$ being nontrivial. The $\mu_{p,K}$-action is given by
\begin{eqnarray*}
\mu_{p,K}\times P_f & \longrightarrow & P_f \\
(z, x) & \longmapsto & z\cdot x
\end{eqnarray*}
We write $f=u\pi^i$ for $u\in R^{\times}$ and $0\leqslant i\leqslant p-1$. The situation separates into two cases: \\

\noindent \textbf{Case I}. $i=0$. In this case, $P_f$ naturally extends to a $\mu_{p,R}$-torsor
\[
\widetilde{P}_f := {\rm Spec}\big(R[X]/(X^p - f)\big)
\]
which is trivialized by the normalization of $R$ in $K_f$. Hence it is the maximal model of $P_f$, cf. Remark \ref{rmk:max_model} (3). Note that only in this case, there could be maximal models which are not regular. If $u$ modulo $\pi$ is not a $p$-power, then $R[X]/(X^p-u)$ is a discrete valuation ring, and the maximal model is regular. If $u$ is a $p$-power modulo $\pi$, let us write
\[
u = \alpha^p + \pi^r\beta
\]
for a maximal $r\geqslant 1$, where $\alpha,\beta\in R^{\times}$. The existence of such maximal $r$ is clear, since otherwise $u$ would be a $p$-power element in $R$. Let $Y=X-\alpha$, we see
\[
\frac{R[X]}{(X^p - u)} = \frac{R[Y]}{(Y^p - \pi^r\beta)}
\]
thus the maximal model is not regular if $r>1$. \\

\noindent \textbf{Case II}. $i>0$. In this case, we will see that the maximal model is always the normalization. Let $m,n\in\mathbb{N}$ with $mi-np=1$, and $\tau:=\pi^{-n}X^m\in K_f$. Then
\[
R[\tau] = R[T]/(T^p-u^m\pi)
\]
is a discrete valuation ring and it is the normalization of $R$ in $K_f$. The $\mu_p$-action naturally extends to ${\rm Spec}(R[\tau])$ by
\begin{eqnarray*}
\mu_{p,R}\times {\rm Spec}(R[\tau]) & \longrightarrow & {\rm Spec}(R[\tau]) \\
(z, \tau) & \longmapsto & z^m\cdot\tau
\end{eqnarray*}
thus ${\rm Spec}(R[\tau])$ is the maximal model of $P_f$ by Lemma \ref{lemma:extend_normalization}. \\

\subsection{The local-local group scheme $\alpha_{p,R}$}

From the short exact sequence
\[
\begin{tikzcd}
0 \arrow[r] & \alpha_{p,K} \arrow[r] & \mathbb{G}_{a,K} \arrow[r] & \mathbb{G}_{a,K} \arrow[r] & 0
\end{tikzcd}
\]
we know that $H^1(K,\alpha_p)\simeq K/K^p$, so an $\alpha_p$-torsor over $K$ has the form
\[
P_f = {\rm Spec}(K_f) := {\rm Spec}\big(K[X]/(X^p - f)\big)
\]
where $f\in K$ and we assume $f\notin K^p$. The $\alpha_{p,K}$-action is given by
\begin{eqnarray*}
\alpha_{p,K}\times P_f & \longrightarrow & P_f \\
(a, x) & \longmapsto & x+a
\end{eqnarray*}
We write $f = u\pi^i$ for $u\in R^{\times}$ and $|i|\geqslant 1$, this is always possible by choosing an appropriate representative of $[f]\in K/K^p$, which represents the same isomorphism class of $P_f$. The situation also separates into two cases: \\

\noindent \textbf{Case I}. $i>0$. In this case, $P_f$ naturally extends to an $\alpha_{p,R}$-torsor
\[
\widetilde{P}_f := {\rm Spec}\big(R[X]/(X^p-f)\big)
\]
and it is the maximal model of $P_f$. Moreover, $\widetilde{P}_f$ is regular if and only if $i=1$. \\

\noindent \textbf{Case II}. $i<0$. First, we change the coordinate by $Y=X^{-1}$,
\[
P_f = {\rm Spec}\big(K[Y]/(Y^p-u^{-1}\pi^{-i})\big)
\]
and the $\alpha_{p,K}$-action goes by
\begin{eqnarray*}
\alpha_{p,K}\times P_f & \longrightarrow & P_f \\
(a, y) & \longmapsto & \frac{y}{1+ay}
\end{eqnarray*}
Let $m,n\in\mathbb{N}$ with $m\cdot(-i) - np = 1$, and let $\tau = \pi^{-n}Y^m$. Then
\[
R[\tau] = R[T]/(T^p - u^{-m}\pi)
\]
is the normalization of $R$ in $K_f$. The $\alpha_{p,K}$-action extends to $\widetilde{P}_f := {\rm Spec}(R[\tau])$ by
\begin{eqnarray*}
\alpha_{p,R}\times \widetilde{P}_f & \longrightarrow & \widetilde{P}_f \\
(a, \tau) & \longmapsto & \frac{\tau}{(1+au^n\tau^{-i})^m}
\end{eqnarray*}
therefore $\widetilde{P}_f$ is the maximal model of $P_f$ by Lemma \ref{lemma:extend_normalization}, and it is always regular. \\

\subsection{The congruence group schemes $H_{\lambda}$}

Let $\lambda\in R$ be an element satisfying
\[
v_R(\lambda) \leqslant \frac{v_R(p)}{p-1}
\]
in particular, the condition is void if $R$ has characteristic $p$. The \textit{congruence $R$-group scheme $H_{\lambda}$ of level $\lambda$} has the underlying scheme structure
\[
H_{\lambda} = {\rm Spec} \frac{R[x]}{\big((1+\lambda x)^p - 1\big)/\lambda^p}
\]
and the group law is given by
\[
x_1\circ x_2 = x_1 + x_2 + \lambda\cdot x_1x_2.
\]
Let $\mu:=p/\lambda^{p-1}$ (if $\lambda=0$, then let $\mu\in R$), this is an element of $R$ by looking at its valuation
\[
v_R(\mu) = v_R(p) - (p-1)v_R(\lambda) \geqslant 0.
\]
Then the group scheme $H_{\lambda}$ fits into a Kummer-type sequence (cf. \cite{AG07}, Appendix A)
\[
\begin{tikzcd}
0 \arrow[r] & H_{\lambda} \arrow[r] & \mathcal{G}^{\lambda} \arrow[r, "\varphi^{\lambda}"] & \mathcal{G}^{\lambda^p} \arrow[r] & 0
\end{tikzcd}
\]
where $\mathcal{G}^{\lambda}$ is an affine smooth one-dimensional $R$-group scheme with the underlying scheme structure
\[
\mathcal{G}^{\lambda} = {\rm Spec}\bigg(R\bigg[x,\frac{1}{1+\lambda x}\bigg]\bigg)
\]
and the group law
\[
x_1\circ x_2 = x_1 + x_2 + \lambda\cdot x_1x_2.
\]
The isogeny $\varphi^{\lambda}$ is defined explicitly by
\[
\varphi^{\lambda}(x) := \big((1+\lambda x)^p-1\big)/\lambda^p = x^p + \sum_{i=1}^{p-1}\frac{1}{p}\binom{p}{i}\lambda^{i-1}\mu x^i.
\]

We have the following result on structure of the special fiber $(H_{\lambda})_k$:

\begin{lemma}{\rm [\cite{AG07} Lemma 2.2]}
\[
(H_{\lambda})_k = \left\{
\begin{array}{cl}
\mu_{p,k} & \text{if}\ \ v_R(\lambda)=0; \\
\alpha_{p,k} & \text{if}\ \ v_R(p)=\infty,\ v_R(\lambda)>0; \\
\alpha_{p,k} & \text{if}\ \ \infty > v_R(p) > (p-1)v_R(\lambda)>0; \\
\mathbb{Z}/p\mathbb{Z} & \text{if}\ \ \infty > v_R(p) = (p-1)v_R(\lambda).
\end{array}
\right.
\]
\end{lemma}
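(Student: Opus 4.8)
The plan is to read the special fibre directly off the explicit Hopf-algebra presentation of $H_\lambda$ and to track how it degenerates as a function of the two quantities $v_R(\lambda)$ and $v_R(\mu)$, where $\mu=p/\lambda^{p-1}$ satisfies $v_R(\mu)=v_R(p)-(p-1)v_R(\lambda)$. Reducing modulo the maximal ideal $\mathfrak m$ of $R$ amounts to reducing two pieces of data: the defining polynomial $\varphi^\lambda(x)=x^p+\sum_{i=1}^{p-1}\tfrac1p\binom pi\lambda^{i-1}\mu\,x^i$, and the comultiplication $x\mapsto x\otimes1+1\otimes x+\lambda\,(x\otimes x)$. The elementary input I would use is that $\tfrac1p\binom pi$ is an integer prime to $p$ for $1\le i\le p-1$, so the reduction of the coefficient of $x^i$ is a nonzero $k$-multiple of $\bar\lambda^{\,i-1}\bar\mu$, while the coefficient of $x$ is \emph{exactly} $\mu$. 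Thus the shape of the reduced polynomial $\bar\varphi^\lambda$ is governed by whether $\bar\lambda$ and $\bar\mu$ vanish, and the shape of the reduced group law is governed by whether $\bar\lambda$ vanishes.

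First I would dispose of the two infinitesimal regimes. When $v_R(\mu)>0$, every coefficient $\tfrac1p\binom pi\lambda^{i-1}\mu$ with $1\le i\le p-1$ has positive valuation, so $\bar\varphi^\lambda=x^p$ and the special fibre has underlying scheme $\Spec k[x]/(x^p)$; in equicharacteristic $p$ the same conclusion is immediate from $(1+\lambda x)^p=1+\lambda^p x^p$, which forces $\varphi^\lambda(x)=x^p$ already over $R$. It then remains to read off the group law. If $v_R(\lambda)=0$, then $\bar\lambda\in k^\times$ and the substitution $t=1+\lambda x$ (legitimate since $\lambda\in R^\times$) turns the presentation into $t^p=1$ with $\Delta(t)=t\otimes t$, so $H_\lambda\cong\mu_{p,R}$ and $(H_\lambda)_k=\mu_{p,k}$. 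If instead $v_R(\lambda)>0$, then $\bar\lambda=0$, the reduced comultiplication is the additive one $x\mapsto x\otimes1+1\otimes x$, and $\Spec k[x]/(x^p)$ with this law is precisely $\alpha_{p,k}$; this covers both $v_R(p)=\infty,\ v_R(\lambda)>0$ and $\infty>v_R(p)>(p-1)v_R(\lambda)>0$.

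Finally I would treat the boundary case $\infty>v_R(p)=(p-1)v_R(\lambda)$, where $v_R(\mu)=0$ (so $\bar\mu\in k^\times$) while $v_R(\lambda)>0$ (so $\bar\lambda=0$). Here the coefficient of $x^i$ for $2\le i\le p-1$ still reduces to $0$, being a multiple of $\bar\lambda^{\,i-1}\bar\mu$, whereas the coefficient of $x$ equals $\mu$ and survives; hence $\bar\varphi^\lambda=x^p+\bar\mu\,x$ and the reduced law is additive. The crucial point, and the only place where more than coefficient-bookkeeping is required, is to identify the result with $\mathbb Z/p\mathbb Z$: since $(\bar\varphi^\lambda)'=\bar\mu\in k^\times$, the polynomial $x^p+\bar\mu x$ is separable, so $k[x]/(x^p+\bar\mu x)$ is an étale $k$-algebra and $(H_\lambda)_k=\ker\bigl(F+\bar\mu\colon \mathbb{G}_{a,k}\to \mathbb{G}_{a,k}\bigr)$ is a finite étale group scheme of order $p$, i.e.\ a form of $\mathbb Z/p\mathbb Z$. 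I expect this étale case to be the real obstacle: one must first verify that exactly the terms $x^p$ and $\mu x$ persist (the valuation estimate on the middle coefficients, together with the exact value of the linear coefficient), and then recognise the additive kernel as $\mathbb Z/p\mathbb Z$. The latter is literally the constant group once $k$ contains all roots of $x^{p-1}+\bar\mu$ (and a possibly nontrivial étale twist of it otherwise); in every case it is the étale group scheme of order $p$ predicted by the Oort--Tate/Raynaud dichotomy for models of $\mu_p$, which is what the statement records.
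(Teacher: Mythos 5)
The paper does not actually prove this lemma --- it is quoted verbatim from \cite{AG07}, Lemma 2.2 --- so your proposal can only be judged on its own terms. Your treatment of the first three cases is correct and is the natural direct computation: $\lambda\in R^\times$ gives $H_\lambda\cong\mu_{p,R}$ via $t=1+\lambda x$; and when $v_R(\mu)>0$ and $v_R(\lambda)>0$ all lower coefficients of $\varphi^\lambda$ and the quadratic term of the comultiplication die modulo $\mathfrak m$, leaving $\alpha_{p,k}$.

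There is, however, a genuine gap in the fourth case, exactly at the point you flag as ``the real obstacle'' and then leave unresolved. You correctly reduce to $\bar\varphi^\lambda=x^p+\bar\mu\,x$ with $\bar\mu\in k^\times$ and additive law, but you only conclude that $(H_\lambda)_k$ is \emph{an} \'etale group scheme of order $p$, ``possibly a nontrivial twist'' of $\mathbb Z/p\mathbb Z$. The lemma asserts the constant group scheme, and this is not automatic: for instance over $R=\mathbb Z_3[\sqrt3]$ with $\lambda=\sqrt3$ one gets $\bar\mu=1$ and $x^3+x$, whose nonzero roots are not in $\mathbb F_3$. What saves the statement is the standing hypothesis of Section 6 that $K$ contains a primitive $p$-th root of unity $\zeta_p$ whenever $\mathrm{char}(K)\neq p$ (which is forced here, since $v_R(p)<\infty$). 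Then $v_R(\zeta_p-1)=v_R(p)/(p-1)=v_R(\lambda)$, so $\lambda=u(\zeta_p-1)$ with $u\in R^\times$, and the classical identities $\prod_{i=1}^{p-1}(\zeta_p^i-1)=p$ together with $(\zeta_p^i-1)/(\zeta_p-1)\equiv i \pmod{(\zeta_p-1)}$ and Wilson's theorem give $\bar\mu=-\bar u^{\,1-p}$. Hence $x^p+\bar\mu\,x=x\prod_{c\in\mathbb F_p^\times}\bigl(x-c\,\bar u^{-1}\bigr)$ splits completely over $k$, the Galois action on the kernel is trivial, and $(H_\lambda)_k$ is literally $\mathbb Z/p\mathbb Z$. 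Without invoking $\zeta_p\in K$ your argument proves a strictly weaker statement than the one recorded.
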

In the following, we will calculate the maximal model under assumptions $0<v_R(\lambda)<\infty$ and $v_R(p)=\infty$. \\

Under our assumptions, we have $\mu=0$, hence $\varphi^{\lambda}(x)=x^p$, and the congruence group scheme $H_{\lambda}$ is local. An $(H_{\lambda})_K$-torsor $P_f$ has the form
\[
P_f := {\rm Spec}\big(K[W]/(W^p-f)\big)
\]
with $f\in K$. We assume that $P_f$ is nontrivial, hence $f\neq 0$. The action is given by
\begin{eqnarray*}
(H_{\lambda})_K\times P_f & \longrightarrow & P_f \\
(x, w) & \longmapsto & x+w+\lambda xw \\
\end{eqnarray*}

\begin{lemma}
If $v_R(f)\geqslant 0$, then $P_f$ extends to an $H_{\lambda}$-torsor.
\end{lemma}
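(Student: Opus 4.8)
The plan is to exhibit the evident integral extension and verify the torsor axioms by hand. Since $v_R(f)\geqslant 0$ we have $f\in R$, so I set $\widetilde P_f:={\rm Spec}(A)$ with $A:=R[W]/(W^p-f)$. As $A$ is free of rank $p$ over $R$ with basis $1,W,\dots,W^{p-1}$, the structure morphism $\widetilde P_f\to S$ is finite, flat and surjective, hence faithfully flat, and its generic fibre is $P_f$. The action formula $(x,w)\mapsto x+w+\lambda xw$ has coefficients in $R$, so it defines an $H_\lambda$-action on $\widetilde P_f$ extending the given one; the unit and associativity axioms are polynomial identities that already hold over $K$, hence hold over $R$ because $A$ is $R$-flat, thus torsion-free, and embeds in its generic fibre.

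Next I would reduce the torsor property to a single isomorphism. Recall that under our standing hypotheses ${\rm char}(R)=p$ and $\mu=0$, so $H_\lambda={\rm Spec}\,R[x]/(x^p)$ as a scheme. It then suffices to show that the map
\[
\Phi:\ H_\lambda\times_S\widetilde P_f\ \longrightarrow\ \widetilde P_f\times_S\widetilde P_f,\qquad (x,w)\longmapsto (x+w+\lambda xw,\ w),
\]
is an isomorphism. On coordinate rings this reads
\[
\Phi^*:\ R[w_1,w_2]/(w_1^p-f,\,w_2^p-f)\ \longrightarrow\ R[x,w]/(x^p,\,w^p-f),\qquad w_1\mapsto x+w+\lambda xw,\quad w_2\mapsto w,
\]
a homomorphism between two $R$-algebras that are free of rank $p^2$. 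Since $P_f$ is an $(H_\lambda)_K$-torsor by hypothesis, the base change $\Phi^*_K$ is an isomorphism; as both sides are torsion-free $R$-modules sitting inside their generic fibres, this already forces $\Phi^*$ to be injective. So the whole statement comes down to the surjectivity of $\Phi^*$.

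This is the crux, and it is where the hypotheses $v_R(\lambda)>0$ and $v_R(f)\geqslant 0$ are used. Because $\Phi^*$ is an $R$-algebra map sending $w_2\mapsto w$, its image contains the entire subalgebra $R[w]=A$ of the target. From $\Phi^*(w_1)-\Phi^*(w_2)=x\,(1+\lambda w)$ it therefore remains only to show that $1+\lambda w$ is a unit in $A$: then $(1+\lambda w)^{-1}\in A$ lies in the image, and so does $x=(1+\lambda w)^{-1}\big(\Phi^*(w_1)-\Phi^*(w_2)\big)$, giving surjectivity. Here I would exploit that we are in characteristic $p$: applying Frobenius,
\[
(1+\lambda w)^p = 1+\lambda^p w^p = 1+\lambda^p f,
\]
and $v_R(\lambda^p f)=p\,v_R(\lambda)+v_R(f)>0$, so $1+\lambda^p f\in R^\times$. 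Hence $(1+\lambda w)^p$, and therefore $1+\lambda w$, is a unit in $A$.

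Putting these together, $\Phi^*$ is injective and surjective, hence an isomorphism, and $\widetilde P_f$ is an $H_\lambda$-torsor. The main obstacle is precisely the invertibility of the twist $1+\lambda w$: once it is secured the argument is formal, and it is exactly the step where the bound $v_R(\lambda)>0$, together with the integrality $v_R(f)\geqslant 0$, is indispensable. (One can equally read the invertibility off the norm $N_{A/R}(1+\lambda w)=1\pm\lambda^p f$, but the Frobenius computation in characteristic $p$ is the cleanest route.)
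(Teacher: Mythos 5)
Your proof is correct, and its skeleton is the same as the paper's: take $\widetilde P_f={\rm Spec}\,R[W]/(W^p-f)$, observe the action formula is integral, and reduce everything to the map $\Phi: H_\lambda\times_S\widetilde P_f\to\widetilde P_f\times_S\widetilde P_f$ being an isomorphism, which in both arguments comes down to inverting $1+\lambda W$ in $A=R[W]/(W^p-f)$. Where you diverge is in how that invertibility is obtained, and in how the isomorphism is then concluded. The paper writes down the formal inverse $x\mapsto (W_1-W_2)/(1+\lambda W_2)$ and expands $(1+\lambda W_2)^{-1}$ as a geometric series, regrouped modulo $W_2^p=f$; each coefficient is an infinite sum whose terms have valuation $i\,v_R(\lambda)+k\big(v_R(f)+p\,v_R(\lambda)\big)\to\infty$, so convergence uses the completeness of $R$ (a standing hypothesis of that section). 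You instead note that in characteristic $p$ one has $(1+\lambda W)^p=1+\lambda^p f$, a unit of $R$ since $p\,v_R(\lambda)+v_R(f)>0$, so $1+\lambda W$ is a unit outright; this is cleaner, avoids completeness at this step, and gives an explicit inverse $(1+\lambda^p f)^{-1}(1+\lambda W)^{p-1}$. You then deduce bijectivity of $\Phi^*$ from injectivity (via flatness and the generic-fibre isomorphism) plus surjectivity (the image is a subring containing $w$ and $x$), whereas the paper exhibits a two-sided inverse directly; both are sound. One small point worth making explicit in your surjectivity step: the image of $\Phi^*$ is an $R$-subalgebra, so once it contains $(1+\lambda w)^{-1}\in R[w]$ and $x(1+\lambda w)$ it contains their product $x$ — you say this, and it is exactly what is needed.
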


\begin{proof}
If $v_R(f)\geqslant 0$, we claim that $\widetilde{P}_f={\rm Spec}\big(R[W]/(W^p-f)\big)$ is an $H_{\lambda}$-torsor. It is clear that the $(H_{\lambda})_K$-action on $P_f$ extends to an $H_{\lambda}$-action on $\widetilde{P}_f$ via the same formula. We need to show that the following morphism is an isomorphism
\begin{eqnarray*}
H_{\lambda}\times \widetilde{P}_f & \longrightarrow & \widetilde{P}_f\times \widetilde{P}_f \\
\frac{R[W_1,W_2]}{(W_1^p-f, W_2^p-f)} & \longrightarrow & \frac{R[x,W]}{(x^p, W^p-f)} \\
W_1 & \longmapsto & x+W+\lambda xW \\
W_2 & \longmapsto & W
\end{eqnarray*}
We can write an inverse morphism formally as
\begin{eqnarray*}
W & \longmapsto & W_2 \\
x & \longmapsto & \frac{W_1-W_2}{1+\lambda W_2}
\end{eqnarray*}
Notice that
\[
\frac{1}{1+\lambda W_2} = \sum_{i=1}^{p-1}\Big(\sum_{k=0}^{\infty}(-\lambda)^{i+kp}f^k\Big)W_2^i
\]
the valuation of each term in the coeafficient of $W_2^i$
\[
v_R((-\lambda)^{i+kp}f^k) = iv_R(\lambda) + k(v_R(f)+pv_R(\lambda))
\]
turns to $+\infty$ as $k\to +\infty$. Hence each coefficient converges, and this formal inverse is an actual inverse. Thus $\widetilde{P}_f$ is an $H_{\lambda}$-torsor which extends $P_f$.
\end{proof}

Consequently, in the case $v_R(f)\geqslant 0$, the extended $H_{\lambda}$-torsor $\widetilde{P}_f$ is the maximal model of $P_f$. \\

\begin{lemma}
If $v_R(f)<0$, then the $(H_{\lambda})_K$-action on $P_f$ extends to its normalization.
\end{lemma}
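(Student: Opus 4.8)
The plan is to exhibit the normalization explicitly and then verify, by a valuation estimate, that the co-action stays integral; the statement follows the same pattern as the $\mu_p$ and $\alpha_p$ computations above (and afterwards one may invoke Lemma~\ref{lemma:extend_normalization}). Since $k$ is perfect and $f\notin K^p$, the purely inseparable extension $K_f=K(f^{1/p})$ is totally ramified of degree $p$ (a perfect residue field admits no residue extension), so its ring of integers $R'$ is a DVR; write $w$ for the normalized valuation with $w(\pi)=p$ and $w|_K=p\,v_R$. Because $x^p=0$ on $H_\lambda$, any co-action expands as a \emph{finite} sum $\sum_{j=0}^{p-1}x^j c_j$ with $c_j\in K_f$, and it extends to $R'$ exactly when each $c_j\in R'$; as the co-action axioms already hold on the generic fibre, it suffices to check integrality of the image of a single uniformizer $\tau$ of $R'$.

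First I would treat the generic case $p\nmid i$, where $i=v_R(f)<0$. Passing to $Y=W^{-1}$ gives $Y^p=u^{-1}\pi^{-i}$ with $w(Y)=-i>0$, and the action becomes $Y\mapsto Y/\bigl(1+x(\lambda+Y)\bigr)$. Choosing $m,n\in\NN$ with $m(-i)-np=1$ and setting $\tau=\pi^{-n}W^{-m}=\pi^{-n}Y^{m}$, one finds $\tau^p=u^{-m}\pi$, so $R'=R[\tau]=R[T]/(T^p-u^{-m}\pi)$, together with the relation $Y=u^{n}\tau^{-i}$. The extended action is then
\[
(x,\tau)\longmapsto \frac{\tau}{\bigl(1+x(\lambda+u^{n}\tau^{-i})\bigr)^{m}},
\]
and expanding $\bigl(1+x(\lambda+u^{n}\tau^{-i})\bigr)^{-m}$ using $x^p=0$ yields coefficients that are $R$-linear combinations of the monomials $\lambda^{\,j-l}u^{\,nl}\tau^{\,1+l|i|}$. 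Since $w(\lambda)=p\,v_R(\lambda)>0$ and $w\bigl(\tau^{\,1+l|i|}\bigr)=1+l|i|>0$, every such monomial lies in $R'$, so the co-action is integral and the action extends.

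The hard part will be the case $p\mid i$, which is the main obstacle: here $Y$ and $\pi$ only generate valuations in $p\ZZ$, so no uniformizer is visible in this coordinate, and one must unwind $f$ using perfectness of $k$. Writing $i=-p\kappa$, the substitution $W_1=W\pi^{\kappa}$ gives $W_1^p=u$ with $w(W_1)=0$; choosing $\alpha\in R^\times$ with $\bar\alpha^p=\bar u$ and repeatedly absorbing $p$-th powers (possible since $k$ is perfect), one reaches $w(W_2)=r$ with $p\nmid r$ for $W_2=W_1-\alpha$, and the action transports to the twisted form
\[
W_2\longmapsto W_2(1+\lambda x)+cx,\qquad c=\pi^{\kappa}+\lambda\alpha,\quad v_R(c)\geq 1.
\]
Taking $\tau=\pi^{-n}W_2^{m}$ with $mr-np=1$ and expanding $\tau\mapsto\pi^{-n}\bigl(W_2(1+\lambda x)+cx\bigr)^{m}$ via $x^p=0$, the term indexed by $k$ is an $R$-multiple of $c^{\,m-k}(1+\lambda x)^{k}\pi^{-n}W_2^{k}$, whose valuation is bounded below by $(m-k)w(c)-np+kr\geq (m-k)p-np+kr$, using $w(c)\geq p$. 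When $r<p$ this quantity is decreasing in $k$ with minimum $mr-np=1>0$, so integrality is immediate; when $r>p$ the estimate can fail at small $k$, and there I expect to first rescale $W_2$ by an appropriate power of $\pi$ (bringing its valuation into $(0,p)$, at the cost of a further twist of the same shape) before repeating the count. In every case the positivity of $v_R(\lambda)$ and $v_R(c)$ is what forces integrality, and once each coefficient lands in $R'$ the $(H_\lambda)_K$-action extends to ${\rm Spec}(R')$, as claimed.
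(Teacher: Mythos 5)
Your reduction to checking integrality of the image of a single uniformizer $\tau$ of $R'=R[\tau]$ is sound, and your first case ($p\nmid i$) is correct and is exactly the paper's argument: pass to $Y=W^{-1}$, set $\tau=\pi^{-n}Y^{m}$ with $m(-i)-np=1$, and observe that $\tau\mapsto\tau\bigl(1+x(\lambda+u^{n}\tau^{-i})\bigr)^{-m}$ has coefficients in $R'[x]/(x^{p})$ because $\lambda\in R$ and only non-negative powers of $\tau$ occur. Be aware that the paper treats \emph{only} this case: its choice of $m,n$ with $mi-np=1$ silently assumes $\gcd(i,p)=1$, so your second case is an attempt to fill a gap in the source rather than to reproduce it.

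That attempt does not work, and cannot: for $p\mid i$ the statement is false as written. First, the concrete defect: the term of your expansion indexed by $j=m-k$ has valuation $j\bigl(w(c)-r\bigr)+1$, and the rescaling $W_2\rightsquigarrow\pi^{-s}W_2$ replaces $(c,r)$ by $(c\pi^{-s},r-sp)$, so $w(c)-r$ is unchanged; bringing $w(W_2)$ into $(0,p)$ improves nothing (and may push the new $c$ out of $R$, so the twist is no longer even integral to start from). Second, the failure is genuine. Take $p=3$, $R=\FF_{3}[[\pi]]$, $\lambda=\pi$, $f=\pi^{-3}+\pi^{2}$ (note $f\notin K^{3}$). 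Then $W_2=\pi W-1$ satisfies $W_2^{3}=\pi^{5}$ and $\sigma(W_2)=W_2(1+\pi x)+2\pi x$, so $r=5>3=w(c)$; the uniformizer is $\tau=\pi^{-3}W_2^{2}$ with $\tau^{3}=\pi$ and $W_2=\tau^{5}$, and the $x^{2}$-coefficient of $\sigma(\tau)=\pi^{-3}\sigma(W_2)^{2}$ equals $\tau^{7}+\tau^{2}+\tau^{-3}$, of valuation $-3$: the action does not extend to $R'$. This is forced, not an accident of coordinates: the equivariant substitution $W'=W+g+\lambda Wg$ with $g=\pi^{-1}$ identifies $P_{f}$ with $P_{2\pi^{2}}$, whose maximal model is the \emph{non-normal} $H_{\lambda}$-torsor $\Spec R[W']/(W'^{3}-2\pi^{2})$ by the first lemma of this subsection, so Lemma~\ref{lemma:extend_normalization} together with uniqueness of maximal models rules out an extension of the action to the normalization. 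The correct repair is therefore not a cleverer estimate but a normalization of the representative: using perfectness of $k$, every class in $H^{1}(K,H_{\lambda})$ is represented by some $f$ with either $v_R(f)\geq 0$ or $v_R(f)<0$ prime to $p$, and the lemma should be stated, as it is implicitly proved, only for such $f$.
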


\begin{proof}
Write $f=\delta\pi^{-i}$ for some $\delta\in R^{\times}$ and $i\in\mathbb{N}$, and let $m,n\in\mathbb{N}$ such that $mi-np=1$. We change the coordinate $W\mapsto U^{-1}$ of $P_f$, and the $(H_{\lambda})_K$-action on $P_f$ goes like
\begin{eqnarray*}
(H_{\lambda})_K\times P_f & \longrightarrow & P_f \\
(x, u) & \longmapsto & \frac{u}{1+ux+\lambda x}
\end{eqnarray*}
As in previous lemma, let $\widetilde{P}_f$ denote the $R$-scheme ${\rm Spec}\big(R[U]/(U^p-f^{-1})\big)$. Let $\mathcal{P}_f$ be the normalization of $\widetilde{P}_f$, which is
\begin{eqnarray*}
\mathcal{P}_f & \longrightarrow & \widetilde{P}_f \\
R[U]/(U^p-\delta^{-1}\pi^i) & \longrightarrow & R[T]/(T^p-\delta^{-m}\pi) \\
U & \longmapsto & \delta^{n}T^i
\end{eqnarray*}
Then the $H_{\lambda}$-action on $\widetilde{P}_f$ extends to $\mathcal{P}_f$ as follows
\begin{eqnarray*}
H_{\lambda}\times\mathcal{P}_f & \longrightarrow & \mathcal{P}_f \\
(x, \tau) & \longmapsto & \frac{\tau}{(1+\delta^n\tau^ix+\lambda x)^m}
\end{eqnarray*}
Hence the normalization $\mathcal{P}^f$ is the maximal model of $P_f$ in this case.
\end{proof}

\begin{rema}
The two situations are not entirely disjoint, it may happen that the $R$-scheme $\widetilde{P}_f$ is an $H_{\lambda}$-torsor and it is already regular. For example, this happens if $f=\pi$. \\
\end{rema}

\section{Appendix: A result of non-flat descent}

In this appendix, we show a result on certain morphisms being effectively epimorphic. For the definition of effective epimorphism, see \cite{FGA} 212-03.\\

Let us recall the definition of pure morphism. We fix a base scheme $S={\rm Spec}(R)$, where $R$ is a discrete valuation ring, $K$ its fractional field, $k$ its residue field. We denote the henselization of $S$ by $S^h$, and correspondingly $X^h$ by $X\times_{S}S^h$. \\

\begin{defi*}
Let $X$ be a $S$-scheme locally of finite type. It is called {\it $S$-pure} if the closure of any associated point of the generic fiber of $X^h$ meets its special fiber. \\
\end{defi*}

\begin{lemma*}\label{lemma:eff_epi}
Let $u:Z'\to Z$ be an $S$-morphism between finite flat $S$-schemes. If $u$ is schematically dominant, then $u$ is an effective epimorphism.
\end{lemma*}

\begin{proof}
Let $A,A'$ be the function algebras of $Z,Z'$ respectively. Since $u$ is finite, by \cite{SGA1} Expos\'e VIII, Proposition 5.1, it is an effective epimorphism if the sequence
\[
\begin{tikzcd}
A \arrow[r] & A' \arrow[r, shift left] \arrow[r, shift right] & A'\otimes_A A'
\end{tikzcd}
\]
is exact. The first map is injective by assumption. Let $\{e_1,e_2,...,e_n\}$ be a basis for $A'$ with $e_1=1$.\footnote{This is possible, since $A'$ is a finite free module, and the sequence $0\to R\to A'\to A'/R\to 0$ splits.} By the structure theorem for modules over a principal domain, there are natural numbers $a_2\leqslant ...\leqslant a_m$ with $m\leqslant n$ such that $\{1,\pi^{a_2}e_2,...,\pi^{a_m}e_m\}$ is a basis for $A$. Moreover, $\{e_i\otimes_R e_j\}$ is a basis for $A'\otimes_R A'$, and $A'\otimes_A A'$ is a quotient of it by the submodule generated by elements of the form $\pi^{a_i}(e_i\otimes_R 1 - 1\otimes_R e_i)$ for $2\leqslant i\leqslant m$.

Let $x=x_1+\sum_{i=2}^n x_ie_i\in A'$ such that $x\otimes_A 1=1\otimes_A x$ in $A'\otimes_A A'$, then there are elements $y_1,...,y_m\in R$ with
\[
\sum_{i=2}^n x_i(e_i\otimes 1 - 1\otimes e_i) = \sum_{i=2}^m y_i\pi^{a_i}(e_i\otimes 1 - 1\otimes e_i),
\]
theerefore $x=x_1+\sum_{i=2}^m y_i\pi^{a_i}e_i$ is in $A$.
\end{proof}

The main result of this appendix is the following

\begin{prop*}\label{prop:eff_epi}
Let $u:Z'\to Z$ be a finite morphism between flat $S$-schemes of finite type, such that $u_K$ is an effective epimorphism. Then $u$ is an effective epimorphism, and it remains true after any flat finite type base change $T\to S$. \\
\end{prop*}

\begin{proof}
It suffices to prove it over the henselization $S^h$ of $S$, so we assume $S$ is henselian and keep the same notations. Let $Z'':=Z'\times_Z Z'$ and $v\:Z''\to Z$ denotes the canonical map. Since $u$ is finite, we need to show that the sequence
\[
\begin{tikzcd}
\mathscr{O}_Z \arrow[r] & u_*\mathscr{O}_{Z'} \arrow[r, shift left, "{\rm pr}_1"] \arrow[r, shift right, "{\rm pr}_2" swap] & v_*\mathscr{O}_{Z''}
\end{tikzcd}
\]
is exact. Since $Z,Z'$ are flat, and $u_K$ is schematically dominant, the first arrow is injective. \\

Let us show the exactness in the middle. The question is Zariski local on $Z$. Note that the exactness over the open $Z_K$ is clear, since $u_K$ is an effective epimorphism. It remains to show the exactness over the neighborhood of the special fiber. By \cite{Rom12} Lemma 2.1.7 and Lemma 2.1.11, for any fixed point $z\in Z$ in the special fiber, we can find an affine open neighborhood which is $S$-pure. By restricting $Z',Z''$ to such an affine open, we reduce to the case where $Z$ is affine and $S$-pure. Now, it suffices to prove that for any function $f':Z'\to\mathbb{A}^1_R$ such that ${\rm pr}_1^*f' = {\rm pr}_2^*f'$, then $f'$ descends to $f:Z\to\mathbb{A}^1_R$. By {\it op.cit.} Proposition 3.2.5, it is sufficient to check it on the generic fiber and all finite flat closed subschemes of $Z$. Indeed, since $u_K$ is an effective epimorphism, we already have the descent function on $Z_K$. For finite flat closed subschemes, by restricting $Z',Z''$ to a finite flat subscheme of $Z$, we can apply Lemma \ref{lemma:eff_epi} to descend $f'$ to $Z$. \\

Finally, after any flat finite type base change $T\to S$, the generic fiber of the morphism $u_T:Z'_T\to Z_T$ is still an effective epimorphism. Hence by applying the result in the first part, we conclude that $u_T$ is an effective epimorphism.
\end{proof}

\begin{rema*}
Recall that in the definition of maximal mode, we have a finite flat morphism $f:G_{T_{S'}}\to X$, see Remark \ref{rmk:max_model} (2). By Proposition \ref{prop:eff_epi}, the induced non-flat morphism $g:G_{T_{S'}}\to X_{T_{S'}}$ is an effective epimorphism. \\
\end{rema*}

\addcontentsline{toc}{section}{References}

\bigskip

\noindent
Yuliang HUANG,
{\sc Universit\'e de Rennes 1, CNRS, IRMAR - UMR 6625, F-35000 Rennes, France} \\
Email address: {\tt yu-liang.huang@univ-rennes1.fr, a5h\_4723@163.com}

\end{document}